\newcommand{\D}{{\mathcal D}}
\newcommand{\C}{{\mathcal C}}
\newcommand{\R}[0]{\mathbb R}
\newcommand{\Ds}[0]{\mathcal D}
\newtheorem{Th}{Theorem}[section]
\newtheorem{Lemma}{Lemma}[section]
\newtheorem{Prop}[Lemma]{Proposition}
\newtheorem{Coro}[Th]{Corollary}
\newtheorem{Rem}{Remark}[section]
\begin{document}

\title{On the well-posedness of the Holm-Staley $b$-family of equations}
\author{H. Inci}

\maketitle

\begin{abstract}
In this paper we consider the Holm-Staley $b$-family of equations in the Sobolev spaces $H^s(\R)$ for $s > 3/2$. Using a geometric approach we show that, for any value of the parameter $b$, the corresponding solution map,$u(0) \mapsto u(T)$, is nowhere locally uniformly continuous. 
\end{abstract}

\section{Introduction}\label{section_introduction}

Holm and Staley introduced in \cite{holm_staley} the following family of equations
\begin{equation}\label{b_family}
u_t - u_{xxt} + (b+1) uu_x = b u_x u_{xx} + u u_{xxx}
\end{equation}
or rewritten
\begin{equation}\label{b_rewritten}
u_t + u u_x = (1-\partial_x^2)^{-1} (-b u u_x + (b-3) u_x u_{xx})
\end{equation}
related to shallow water, where
\[
u(t,x) \in \R, \quad (t,x) \in \R^2 
\]
denotes the velocity field and $b \in \R$ is a parameter. For $b=2$ we get the Camassa-Holm equation (see e.g. \cite{4}) and for $b=3$ the Degasperis-Procesi equation (see e.g. \cite{DP}). In his seminal paper \cite{arnold}, Arnold showed that the Euler equation can be interpreted as an equation for a flow on groups of diffeomorphisms of the underlying space. It turns out that quite a few nonlinear evolution equations such as the KdV (see \cite{kdv_euler_equation}) or the Camassa-Holm equation (see \cite{CH_euler_equation}) can be treated by such a geometric approach. Recently, in \cite{escher_kolev}, it has been shown that this is also the case for the Holm-Staley $b$-family. In \cite{geometric}, Constantin used such a geometric approach to get local well-posedness and blow-up results for the Camassa-Holm equation on $\R$. In this paper we will use these methods to prove our results.\\
\noindent
The Cauchy problem for \eqref{b_family} in $H^s(\R)$, $s > 3/2$, with initial value $u_0 \in H^s(\R)$, is to find $u \in C^0([0,T],H^s(\R))$ for some $T > 0$, such that we have the following identity in $H^{s-1}(\R)$
\begin{equation}\label{b_integral}
u(t) = u_0 + \int_0^t (1-\partial_x^2)^{-1} (-b u u_x + (b-3) u u_{xx} ) - uu_ x \; ds
\end{equation}
for all $t \in [0,T]$. Here we regard $u_x u_{xx}$ as an element of $H^{s-2}(\R)$ even if $3/2 < s < 2$ -- see Appendix \ref{appendix_b}. With this in mind and the fact that $H^{s-1}(\R)$ is a Banachalgebra we see that the integrand in \eqref{b_integral} is an element of $C^0([0,T],H^{s-1}(\R))$. Hence \eqref{b_integral} makes sense and we have actually $u \in C^1([0,T],H^{s-1}(\R))$.\\

\noindent
Concerning the well-posedness of \eqref{b_family} we have the following result -- see also \cite{Olver}

\begin{Th}\label{th_local_wellposedness}
Let $s > 3/2$. For any given $u_0 \in H^s(\R)$ there is a $T > 0$ and a unique solution $u \in C^0([0,T],H^s(\R)$ to the Cauchy problem \eqref{b_family} with initial value $u(0)=u_0$. The $T$ can be chosen to be the same in a neighborhood $U \subseteq H^s(\R)$ of $u_0$. Moreover the map 
\[
 U \to C^0([0,T],H^s(\R)),\quad u_0 \mapsto u
\]
is continuous.
\end{Th}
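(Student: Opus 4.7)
The plan is to follow the geometric approach of \cite{geometric, escher_kolev}: recast \eqref{b_rewritten} as a smooth ODE on a Hilbert manifold, apply Picard--Lindelöf there, and translate back. Let $\Ds^s(\R)$ denote the orientation-preserving $H^s$-diffeomorphisms of $\R$, modeled as an open subset of $\id + H^s(\R)$, and introduce the Lagrangian flow $\varphi(t)\in\Ds^s(\R)$ via $\dot\varphi = u\circ\varphi$, $\varphi(0)=\id$. Set $v:=\dot\varphi\in H^s(\R)$. Differentiating once more and inserting \eqref{b_rewritten} gives $\ddot\varphi = [(1-\p_x^2)^{-1}(-buu_x + (b-3)u_xu_{xx})]\circ\varphi$, and substituting $u=v\circ\varphi^{-1}$ turns the $(\varphi,v)$-system into a first-order ODE
\[
\dot\varphi = v,\qquad \dot v = F(\varphi,v)
\]
on the tangent bundle $T\Ds^s(\R)\simeq \Ds^s(\R)\times H^s(\R)$.

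The heart of the proof is to show that $F:\Ds^s\times H^s\to H^s$ is (locally) $C^1$, so that Picard--Lindelöf on Hilbert manifolds applies. The composition maps $R_\varphi f = f\circ\varphi$ and $R_{\varphi^{-1}}$ are only continuous, not smooth, as functions of $\varphi$ on $H^s$. The saving observation, going back to Ebin--Marsden and exploited in \cite{geometric, escher_kolev}, is that the conjugated Helmholtz inverse
\[
A_\varphi := R_\varphi\circ(1-\p_x^2)^{-1}\circ R_{\varphi^{-1}} : H^{s-2}(\R)\to H^s(\R)
\]
depends smoothly on $\varphi\in\Ds^s$; this is verified from its explicit Green-function representation, where conjugation by $R_\varphi$ replaces $e^{-|x-y|}$ by $e^{-|\varphi(x)-\varphi(y)|}$, a smooth function of $\varphi$ into the right operator norm. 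The nonlinear source $-buu_x + (b-3)u_xu_{xx}$, when transported to Lagrangian variables, becomes a polynomial in $v, v_x, \varphi_x, \varphi_{xx}$ divided by positive powers of $\varphi_x$, hence a smooth map $\Ds^s\times H^s\to H^{s-2}(\R)$ (using the Banach-algebra property of $H^{s-1}$ and Appendix \ref{appendix_b} to make sense of $u_x u_{xx}$). Composing, $F$ is a smooth vector field on $T\Ds^s(\R)$.

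Picard--Lindelöf then delivers a unique flow $(\varphi,v)\in C^1([0,T],T\Ds^s)$ with $T>0$ uniform in a neighborhood of $(\id,u_0)$, depending smoothly on initial data. Defining $u(t):=v(t)\circ\varphi(t)^{-1}$ produces the sought solution of \eqref{b_integral}, and uniqueness in the Eulerian formulation is inherited from uniqueness of the ODE flow. Continuity of the map $u_0\mapsto u$ into $C^0([0,T],H^s(\R))$ follows by combining smooth dependence of $(\varphi,v)$ on initial data with continuity of $(\psi,w)\mapsto w\circ\psi^{-1}$ on $\Ds^s\times H^s$, a property valid for $s>3/2$ because $\Ds^s(\R)$ is a topological group under composition. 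The main obstacle is the verification of the smoothness of $A_\varphi$ and hence of $F$; it is precisely here that the geometric framework absorbs the apparent derivative loss present in \eqref{b_rewritten}, obviating the parabolic regularizations or Kato-type schemes used in more classical well-posedness proofs.
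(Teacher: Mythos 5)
Your overall architecture is the same as the paper's: pass to Lagrangian coordinates, show the second-order vector field (the spray) is smooth on $\Ds^s(\R)\times H^s(\R)$, apply Picard, and transport back with $u=\dot\varphi\circ\varphi^{-1}$. The one place where you genuinely diverge is at the crux, the smoothness of $A_\varphi=R_\varphi(1-\p_x^2)^{-1}R_{\varphi^{-1}}$. You propose to read it off from the conjugated Green kernel $\tfrac12 e^{-|\varphi(x)-\varphi(y)|}\varphi_x(y)$; the paper instead conjugates the \emph{forward} operator: by the chain rule $R_\varphi\p_x R_{\varphi^{-1}}f=f_x/\varphi_x$ and $R_\varphi\p_x^2R_{\varphi^{-1}}f=f_{xx}/\varphi_x^2-f_x\varphi_{xx}/\varphi_x^3$ are manifestly analytic in $(\varphi,f)$, hence $\varphi\mapsto R_\varphi(1-\p_x^2)R_{\varphi^{-1}}\in L(H^s,H^{s-2})$ is analytic, and since inversion is analytic on the open set of invertible operators, so is $\varphi\mapsto A_\varphi\in L(H^{s-2},H^s)$. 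This buys a short, purely algebraic proof and automatically covers the range $3/2<s<2$, where $H^{s-2}$ is a space of distributions and direct kernel estimates in $L(H^{s-2},H^s)$ --- including the full two-derivative gain, which comes from the kink of the kernel at $x=y$ --- would be painful; your Green-function claim is precisely the step you assert rather than prove, and it is harder than the paper's route. Two smaller points: (i) for uniqueness you say it is ``inherited from uniqueness of the ODE flow'', but an arbitrary Eulerian solution $u\in C^0([0,T],H^s(\R))$ is not a priori of the form $\dot\varphi\circ\varphi^{-1}$ for a geodesic $\varphi$; one must first integrate the merely continuous-in-time vector field $u$ to obtain a $C^1$ flow and then verify that this flow is in fact $C^2$ and satisfies the geodesic equation, which is what Lemma \ref{lemma_uniqueness} does; (ii) since the nonlinearity is quadratic, it is cleaner to polarize it into a symmetric bilinear map $B$ as in the proof of Theorem \ref{th_smooth_spray} before invoking Lemma \ref{lemma_smooth_conjugation}. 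None of this changes the verdict that your strategy is the correct one and coincides with the paper's in all essentials.
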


Now one asks whether the map mentionned in the above theorem is more than continuous, e.g. $C^1$ or at least locally lipschitz. We have the following negative answer.

\begin{Th}\label{th_nonuniform}
Denote by $U_T \subseteq H^s(\R)$ the set of initial values for which we have existence up to at least $T$. Then the map  $U_T \to H^s(\R),\; u_0 \mapsto u(T)$, mapping the initial value to the time $T$ value of the solution, is nowhere locally uniformly continuous.  
\end{Th}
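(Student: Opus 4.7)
The plan is to leverage the Lagrangian reformulation of the $b$-family from \cite{escher_kolev}. Define the flow $\varphi(t,\cdot): \R \to \R$ by $\partial_t\varphi = u\circ\varphi$, $\varphi(0,\cdot) = \id$, and work in the analytic chart $\D^s := \{\id + f : f \in H^s(\R,\R),\ \inf_x(1+f'(x)) > 0\}$ for the Hilbert manifold of $H^s$-diffeomorphisms of $\R$. In these coordinates the $b$-family rewrites as a smooth second-order ODE on $T\D^s$, so the Lagrangian solution map
\[
\mathcal F_T\colon U_T \to \D^s \times H^s,\qquad u_0 \mapsto \bigl(\varphi(T),\,\partial_t\varphi(T)\bigr)
\]
is smooth (in fact analytic). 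The Eulerian time-$T$ value recovers as $u(T) = \partial_t\varphi(T)\circ\varphi(T)^{-1}$, so the solution map of Theorem \ref{th_nonuniform} factors as $\Psi_T = \mathcal E\circ\mathcal F_T$ with $\mathcal E(\varphi,v):= v\circ\varphi^{-1}$.

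Because $\mathcal F_T$ is smooth, hence locally Lipschitz on bounded subsets of $U_T$, any failure of local uniform continuity of $\Psi_T$ must originate in $\mathcal E$. The key lemma I would prove, following the Ebin--Marsden framework, is that for $s > 3/2$ the evaluation map $\mathcal E\colon \D^s \times H^s \to H^s$ is continuous but nowhere locally uniformly continuous. The mechanism is the derivative loss $(v\circ\varphi^{-1})_x = (v_x/\varphi_x)\circ\varphi^{-1}$: the right-translation $v\mapsto v\circ\varphi^{-1}$ is, for each fixed $\varphi$, a bounded isomorphism on $H^s$, but the assignment $\varphi\mapsto v\circ\varphi^{-1}$ loses a derivative. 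Concretely, at any $(\varphi_0, v_0)$ with $v_0'\not\equiv 0$ on some interval, suitably scaled high-frequency bump perturbations of $\varphi_0$ yield sequences $\varphi_n, \tilde\varphi_n \to \varphi_0$ in $H^s$ with $\|\varphi_n - \tilde\varphi_n\|_{H^s} \to 0$ while $\|v_0\circ\varphi_n^{-1} - v_0\circ\tilde\varphi_n^{-1}\|_{H^s} \geq c > 0$.

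It then remains to realise such bad pairs inside the image of $\mathcal F_T$. Fix $u_0 \in U_T$ and a neighborhood $U$ of $u_0$, and set $(\varphi_0, v_0) := \mathcal F_T(u_0)$; after a small perturbation of $u_0$ within $U$ we may assume $v_0'\not\equiv 0$ on some interval. Since $u_0 \mapsto \varphi(T)$ arises from the time-$T$ flow of a smooth ODE on $T\D^s$, its differential at $u_0$ is --- modulo a nowhere-dense conjugate-point locus in $U_T$ that can be avoided by a further small perturbation --- a linear isomorphism $H^s \to H^s$. Pulling back the bump sequences gives $u_n, \tilde u_n \in U$ with $\|u_n - \tilde u_n\|_{H^s} \to 0$, while the $v$-components $\partial_t\varphi_{u_n}(T), \partial_t\varphi_{\tilde u_n}(T)$ stay $o(1)$-close by Lipschitz continuity of $\mathcal F_T$; hence $\|\Psi_T(u_n) - \Psi_T(\tilde u_n)\|_{H^s} \geq c - o(1)$.

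The hardest step is the bump construction for $\mathcal E$ together with control of the second-order error in the smooth expansion of $\mathcal F_T$: the scale and frequency of the bumps must be calibrated so that $\|\varphi_n - \tilde\varphi_n\|_{H^s}$ vanishes strictly faster than the prescribed defect $\|v_0\circ\varphi_n^{-1} - v_0\circ\tilde\varphi_n^{-1}\|_{H^s}$, and so that the quadratic remainder from pulling the sequences back through $\mathcal F_T$ stays below this defect. This requires explicit Littlewood--Paley estimates for compositions in $H^s$; everything else (smooth dependence of $\mathcal F_T$, isomorphism of the linearization) is standard in the geometric approach of Constantin \cite{geometric}.
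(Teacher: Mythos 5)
Your factorization $\Psi_T=\mathcal E\circ\mathcal F_T$ with $\mathcal F_T$ analytic is correct and is indeed the starting point of the paper's argument, but your key lemma is false as stated, and this is a fatal gap. You propose to fix $v_0$ and perturb only the diffeomorphism, producing $\varphi_n,\tilde\varphi_n\to\varphi_0$ with $\|v_0\circ\varphi_n^{-1}-v_0\circ\tilde\varphi_n^{-1}\|_{H^s}\geq c>0$. This cannot happen: since both sequences converge to $\varphi_0$ and $\varphi\mapsto v_0\circ\varphi^{-1}$ is continuous (the $\omega$-lemma), both images converge to $v_0\circ\varphi_0^{-1}$. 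Worse, the defect cannot be recovered by letting the sequences wander inside a neighborhood either: for \emph{fixed} $v_0\in H^s$ one approximates $v_0$ by $w\in C_c^\infty$ and writes $\|v_0\circ\varphi-v_0\circ\psi\|_s\leq\|(v_0-w)\circ\varphi\|_s+\|w\circ\varphi-w\circ\psi\|_s+\|(w-v_0)\circ\psi\|_s$; the outer terms are uniformly small by the uniform boundedness of $R_\varphi$ on a neighborhood (the paper's Lemma \ref{lemma_composition_bounded_below_above}), and the middle term is $\lesssim\|w\|_{s+1}\|\varphi-\psi\|_s$. So $\varphi\mapsto v_0\circ\varphi^{-1}$ \emph{is} uniformly continuous near $\varphi_0$ for fixed $v_0$. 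The failure of uniform continuity of composition necessarily requires the composed function to vary along the sequence, with its high-frequency (shrinking-support) content calibrated to the displacement of the diffeomorphisms. Your pull-back step has a second gap: you need the differential of $u_0\mapsto\varphi(T)$ to be surjective to realize prescribed perturbations of $\varphi_0$, and the claim that the non-invertibility locus of $d\exp$ is nowhere dense is unsupported in this infinite-dimensional setting (the paper only needs, and only proves, that $d_{u_0}\exp\neq 0$ on a dense set, via analyticity).

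The paper's route repairs exactly these two points by means of the conservation law $y(1)=\bigl(y_0/\varphi_x(1)^b\bigr)\circ\varphi(1)^{-1}$ with $y=(1-\partial_x^2)u$ (Lemma \ref{lemma_conserved}), which you do not use. This formula places the \emph{initial datum} in the slot of the composed function, so one is free to inject high-frequency content there: the perturbations are made on the initial data, $x_n=u_0+w_n$ and $\tilde x_n=x_n+v/n$, where $w_n$ has fixed norm $R/4$ but support shrinking like $r_n\sim 1/n$. The small shift $v/n\to 0$ moves the diffeomorphism $\exp(\cdot)$ at a point $x_0$ by $\gtrsim\frac{m}{2n}\|v\|_s\geq 4r_n$ (using $d_{u_0}\exp\neq 0$ and second-order Taylor control of $\exp$), so the two transported copies of $w_n$ have disjoint supports and the scaling-invariant disjoint-support inequalities of Appendix \ref{appendix_ineq} give a lower bound $\gtrsim\|w_n\|_s=R/4$ that does not decay. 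All perturbations go forward through $\exp$, so no surjectivity of any differential is needed. If you want to keep your $\mathcal E\circ\mathcal F_T$ framing, you must at minimum replace your key lemma by one in which the $v$-argument carries the escaping frequencies, and find a substitute for the conservation law that lets you control that argument explicitly in terms of the initial data.
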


\noindent
Results saying that the solution map $u_0 \mapsto u(T)$ has not the property to be uniformly continuous on bounded sets is known. On the circle this was proved in the case $b=2$ (Camassa-Holm equation) in \cite{periodic_CH} and in the case $b=3$ (Degasperis-Procesi equation) in \cite{periodic_DP} for $s \geq 2$. For the $b$-family on the line this was proved in \cite{general_b}.\\

\section{The geometric framework}\label{section_geometric}

In \cite{composition} we considered for $s \in \R$, $s > 3/2$, the space $\Ds^s(\R)$ (cf M. Cantor \cite{cantor}) given by
\begin{eqnarray*}
 \Ds^s(\R) &:=& \big\{ \varphi: \R \to \R \;\; C^1-\mbox{diffeomorphism} \; \big| \; \varphi_x > 0 \mbox{ and } \varphi(x) - x \in H^s(\R)\big\} \\
&=& \big\{ \varphi(x)=x + f(x) \; \big| \; f \in H^s(\R) \mbox{ and } \varphi_x > 0 \big\}
\end{eqnarray*}
where $H^s(\R)$ denotes the space of Sobolev functions on $\R$ of class $s$. In terms of the Fourier transform this reads as (see e.g. \cite{adams})
\[ 
 H^s(\R) = \big\{ f \in L^2(\R) \; \big| \; (1+\xi^2)^{s/2} \hat f(\xi) \in L^2(\R;\C) \big\}
\]
where $\hat f$ is the Fourier transform of $f$. Equipped with the scalar product (taking the real part)
\[
 \langle f, g \rangle_s = \Re \int_\R (1+\xi^2)^s \hat f(\xi) \overline{\hat g(\xi)} d\xi
\]
it becomes a Hilbert space. Then 
\[
 \Ds^s(\R) - id = \big\{ \varphi(x) - x \; \big| \; \varphi \in \Ds^s(\R) \big\} \subseteq H^s(\R)
\]
is open and thus has naturally the structure of a analytic Hilbert manifold (cf e.g. \cite{composition}). Moreover $\Ds^s(\R)$ is a topological group under composition. More precisely, for any $k \in \mathbb{Z}_{\geq 0}$,
\[
 H^{s+k}(\R) \times \Ds^s(\R) \to H^s(\R), (f,\varphi) \mapsto f \circ \varphi 
\]
is a $C^k$-map. In the literature the partial maps of the compostion map are referred to as the $\alpha$- resp. the $\omega$-lemma -- see \cite{EM}.\\
In the following we need the notion of sprays. These are special vectorfields on the tangent bundle -- see e.g. \cite{Lang}. In our case we have the following identification for the tangent bundle of $\Ds^s(\R)$
\[
 T\Ds^s(\R) = \Ds^s(\R) \times H^s(\R).
\]
Thus a spray can be defined by a map $S$ with the following structure
\begin{eqnarray*}
 S:\Ds^s(\R) \times H^s(\R) &\to& H^s(\R) \times H^s(\R) \\
(\varphi,v) &\mapsto& \big(v,\Gamma_\varphi(v,v)\big)
\end{eqnarray*}
where $\Gamma$, called the Christoffel map of the spray $S$, is a map
\begin{eqnarray*}
 \Gamma : \Ds^s(\R) &\to& L\big(H^s(\R),H^s(\R);H^s(\R)\big)\\
\varphi &\mapsto& \Gamma_\varphi(\cdot,\cdot)
\end{eqnarray*}
with values in the continuous $H^s(\R)$-valued bilinear forms on $H^s(\R)$. Since we are just interested in the quadratic form $\Gamma_\varphi(v,v)$ we assume $\Gamma_\varphi$ to be symmetric. The integral curves of $S$ projected on $\Ds^s(\R)$ are called the geodesics of $S$. Like in the case of a Riemannian manifold we have also here the notion of an exponential map -- see e.g. \cite{Lang}. More precisely, the equation of the geodesics reads as
\begin{equation}\label{geodesic_eq}
\varphi_{tt} = \Gamma_\varphi(\varphi_t,\varphi_t)
\end{equation}
where the subscript $t$ denotes differentiation with respect to $t$. For analytic $S$ the Picard iteration gives local solutions of \eqref{geodesic_eq} with initial data $\varphi(0)=id \in \Ds^s(\R)$ and $\varphi_t(0)=v \in H^s(\R)$. Because of the scaling properties of \eqref{geodesic_eq} there exists a neighborhood $V$ of $0 \in H^s(\R)$ such that the initial value problem 
\begin{equation}\label{geodesic_ivp_raw}
 \left\{ 
\begin{array}{c}
\varphi_{tt} = \Gamma_\varphi(\varphi_t,\varphi_t)\\
\varphi(0)=id, \varphi_t(0) = v
\end{array}
\right.
\end{equation}
admits a solution on the time interval $[0,1]$ for all $v \in V$. This allows us to define the exponential map $\exp$ as
\begin{eqnarray*}
\exp: V &\to& \Ds^s(\R)\\
v &\mapsto& \varphi_v(1)
\end{eqnarray*}
where $\varphi_v$ is the solution of \eqref{geodesic_ivp_raw}. Because of the analytic dependence of solutions of \eqref{geodesic_ivp_raw} on the initial values, see \cite{Lang}, we know that $\exp$ is a smooth map. Moreover the derivative of $\exp$ at $0 \in H^s(\R)$ is the identity, i.e.
\[
 d_0 \exp:H^s(\R) \to H^s(\R), \quad v \mapsto v
\] 
where we have identified $T_{id} \Ds^s(\R)$ with $H^s(\R)$. By the inverse function theorem for Banach spaces, see \cite{Lang}), $\exp$ is an analytic diffeomorphism between a neighborhood $U$ of $0 \in H^s(\R)$ and a neighborhood $V$ of $id \in \Ds^s(\R)$, i.e.
\[
 \exp:U \to V
\]
is an analytic diffeomorphism.\\
For our purpose we define $\Gamma$ at $id \in \Ds^s(\R)$ for $v \in H^s(\R)$ with $s >3/2$ by
\begin{equation}\label{gamma_id}
 \Gamma_{id}(v,v) = (1-\partial_x^2)^{-1} (-b v v_x + (b-3)v_x v_{xx})
\end{equation}
which is a continuous $H^s(\R)$-valued quadratic form on $H^s(\R)$. For $\varphi \in \Ds^s(\R)$ arbitrary, $\Gamma_\varphi$ is defined by
\begin{equation}\label{gamma_phi}
\Gamma_\varphi(v,v) = \big(\Gamma_{id}(v \circ \varphi^{-1},v \circ \varphi^{-1})\big) \circ \varphi.
\end{equation}
In view of the poor regularity properties of the composition map, apriori it is not clear if $\Gamma$ defines a smooth spray. In the next section we verify that this is indeed the case. In the following we will make some formal computations to show how the geodesics of $S$ and solutions to \eqref{b_family} are related. Assume that $\varphi:[0,T] \to \Ds^s(\R)$ solves the initial value problem \eqref{geodesic_ivp_raw}. Then we have for $u:=\varphi_t \circ \varphi^{-1}$ 
\begin{eqnarray*}
 \varphi_{tt} &=& (u \circ \varphi)_t = u_t \circ \varphi + u_x \circ \varphi \cdot \varphi_t\\
&=& u_t \circ \varphi + u_x \circ \varphi \cdot u \circ \varphi. 
\end{eqnarray*}
Substituting this expression into equation \eqref{geodesic_eq} we get
\begin{eqnarray*}
u_t \circ \varphi + u_x \circ \varphi \cdot u \circ \varphi &=& \Gamma_\varphi(\varphi_t,\varphi_t)\\
&=& \Gamma_{id}(\varphi_t \circ \varphi^{-1},\varphi_t \circ \varphi^{-1}) \circ \varphi 
\end{eqnarray*} 
or by \eqref{gamma_id} equivalently
\[
 u_t + u u_x = (1-\partial_x^2)^{-1} ( -b u u_x + (b-3)u_x u_{xx})
\]
which is equation \eqref{b_rewritten}. In the next section we show that by this approach one gets local well-posedness results for equation \eqref{b_family}.

\section{Local wellposedness of the $b$-family of equations}\label{section_local_wellposed}

In this section we establish local existence and uniqueness for the Cauchy problem
\begin{equation}\label{cauchy_problem}
u_t + u u_x = (1-\partial_x^2)^{-1} (-b u u_x + (b-3) u u_{xx}),\quad u(0)=u_0 \in H^s(\R)
\end{equation}
in $H^s(\R)$, $s > 3/2$. 

\begin{Th}\label{th_smooth_spray}
The spray $S$  given by
\[
 S:\Ds^s(\R) \times H^s(\R) \to H^s(\R) \times H^s(\R),\quad (\varphi,u) \mapsto \big(v,\Gamma_\varphi(u,u)\big)
\]
where 
\[
\Gamma_\varphi(u,u)= R_\varphi (1-\partial_x^2)^{-1} \big(-b (R_{\varphi^{-1}}u) \cdot (R_{\varphi^{-1}}u)_x + (b-3) (R_{\varphi^{-1}}u)_x \cdot (R_{\varphi^{-1}}u)_{xx} \big) 
\]
is analytic.
\end{Th}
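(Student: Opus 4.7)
The plan is to absorb the composition operators $R_{\varphi^{-1}}$ hidden inside $\Gamma_\varphi(u,u)$ via an explicit chain-rule computation, rewriting the construction as
\[
\Gamma_\varphi(u,u) = \tilde L_\varphi^{-1}\, Q(\varphi,u),
\]
where $Q(\varphi,u)$ is a polynomial expression in $u$, $u_x$, $u_{xx}$, $\varphi_x$, $\varphi_{xx}$ and $1/\varphi_x$, and $\tilde L_\varphi := R_\varphi(1-\partial_x^2)R_{\varphi^{-1}}$ is the Helmholtz operator conjugated by $R_\varphi$, which admits the explicit form of a uniformly elliptic second-order differential operator in Eulerian variables. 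Analyticity of the spray is then reduced to analyticity of $Q$ as a map into a Sobolev space and analyticity of the operator-valued inverse $\varphi \mapsto \tilde L_\varphi^{-1}$.

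Concretely, setting $\psi = \varphi^{-1}$ and using the chain-rule identities
\[
(u\circ\psi)_x = (u_x/\varphi_x)\circ\psi,\qquad (u\circ\psi)_{xx} = \Bigl(\frac{u_{xx}\varphi_x - u_x\varphi_{xx}}{\varphi_x^{3}}\Bigr)\circ\psi,
\]
a direct computation gives
\[
-b(R_\psi u)(R_\psi u)_x + (b-3)(R_\psi u)_x (R_\psi u)_{xx} = R_\psi\, Q(\varphi,u)
\]
with
\[
Q(\varphi,u) = -b\,\frac{u u_x}{\varphi_x} + (b-3)\,\frac{u_x u_{xx}}{\varphi_x^{3}} - (b-3)\,\frac{u_x^{2}\varphi_{xx}}{\varphi_x^{4}},
\]
and similarly $\tilde L_\varphi u = u - u_{xx}/\varphi_x^{2} + u_x\,\varphi_{xx}/\varphi_x^{3}$. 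The two computations together yield the claimed identity.

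For the analyticity of $Q:\Ds^s(\R)\times H^s(\R) \to H^{s-2}(\R)$, the key ingredients are: (i) $\varphi_x \in 1 + H^{s-1}(\R)$ is uniformly bounded below by a positive constant on any bounded neighborhood in $\Ds^s(\R)$, so $\varphi \mapsto 1/\varphi_x \in 1+H^{s-1}$ is analytic by the Neumann-series argument in the Banach algebra $H^{s-1}(\R)$ (valid since $s-1>1/2$); and (ii) the continuous bilinear multiplications $H^{s-1}\cdot H^{s-1}\to H^{s-1}$ and $H^{s-1}\cdot H^{s-2}\to H^{s-2}$, together with the product $u_x u_{xx}\in H^{s-2}$ supplied by Appendix \ref{appendix_b}. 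Combining these analytic and polynomial operations assembles $Q$ as an analytic map, and the same ingredients show that $\varphi\mapsto \tilde L_\varphi \in L(H^s(\R),H^{s-2}(\R))$ is analytic.

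The main obstacle is to verify that $\tilde L_\varphi$ is a bijection $H^s(\R)\to H^{s-2}(\R)$ for every $\varphi\in\Ds^s(\R)$. For this I would use the factorization $\tilde L_\varphi = R_\varphi\circ(1-\partial_x^{2})\circ R_\psi$ to write down the explicit inverse
\[
\tilde L_\varphi^{-1} = R_\varphi\circ(1-\partial_x^{2})^{-1}\circ R_\psi,
\]
which is a bounded map $H^{s-2}(\R)\to H^s(\R)$ provided composition by elements of $\Ds^s(\R)$ acts continuously on $H^{s-2}(\R)$; this index lies in the admissible range of the composition results of \cite{composition}, since $s-2\geq 1-s$ for $s\geq 3/2$. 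Should the composition on negative indices be unavailable at that index, a fallback is standard elliptic theory: $\tilde L_\varphi$ is uniformly elliptic with leading coefficient $1/\varphi_x^{2}>0$ bounded below and $H^{s-1}$-regular coefficients tending to constants at infinity, which yields the isomorphism. Once pointwise invertibility is established, analyticity of $\varphi\mapsto \tilde L_\varphi^{-1}$ is automatic from the analyticity of Banach-space inversion on the open set of invertible operators; composing with the continuous bilinear evaluation $(A,f)\mapsto Af : L(H^{s-2},H^s)\times H^{s-2}\to H^s$ finishes the proof.
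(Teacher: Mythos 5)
Your proposal is correct and follows essentially the same route as the paper: the identity $\Gamma_\varphi(u,u)=\bigl(R_\varphi(1-\partial_x^2)R_{\varphi^{-1}}\bigr)^{-1}R_\varphi B(R_{\varphi^{-1}}u,R_{\varphi^{-1}}u)$ is exactly the paper's ``piecing together'' step, your explicit chain-rule formulas for $Q(\varphi,u)$ and $\tilde L_\varphi$ are the content of the paper's Lemma \ref{lemma_smooth_conjugation}, and the final appeal to analyticity of operator inversion is identical. Your added care about composition acting on $H^{s-2}(\R)$ for $3/2<s<2$ is warranted and is precisely what the paper's Appendix \ref{appendix_b} supplies.
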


\noindent
Recall that we use the notation $R_\varphi$ for right translation $H^s(\R) \to H^s(\R), f \mapsto f \circ \varphi$. Before proving Theorem \ref{th_smooth_spray} we show the following lemma.

\begin{Lemma}\label{lemma_smooth_conjugation}
Let $s > 3/2$. For $k=1,2$ the map
\[
 \delta^{(k)}:\Ds^s(\R) \times H^s(\R) \to H^{s-k}(\R),\quad (\varphi,f) \mapsto R_\varphi \partial_x^k R_{\varphi^{-1}}f
\]
is analytic.
\end{Lemma}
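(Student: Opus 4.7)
The plan is to reduce everything to an explicit algebraic formula via the chain rule, so that the delicate regularity properties of the composition operator $R_\varphi$ drop out entirely. Writing $\psi := \varphi^{-1}$ and differentiating the identity $\psi(\varphi(x)) = x$ once and twice, one obtains
\[
\psi_x \circ \varphi = \frac{1}{\varphi_x}, \qquad \psi_{xx} \circ \varphi = -\frac{\varphi_{xx}}{\varphi_x^{3}}.
\]
Substituting these into $\delta^{(k)}(\varphi,f) = R_\varphi \partial_x^k R_{\varphi^{-1}} f$ and expanding with the chain rule yields the pointwise algebraic expressions
\[
\delta^{(1)}(\varphi,f) = \frac{f_x}{\varphi_x}, \qquad \delta^{(2)}(\varphi,f) = \frac{f_{xx}}{\varphi_x^{2}} - \frac{f_x\,\varphi_{xx}}{\varphi_x^{3}}.
\]
All compositions with $\varphi$ and $\varphi^{-1}$ have cancelled, so for these particular maps one never needs to invoke the $\alpha$- or $\omega$-lemma, which is the only non-trivial regularity input usually required when manipulating $R_\varphi$.

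With these formulas in hand, analyticity of $\delta^{(k)}$ follows by composing four kinds of analytic maps: (a) the continuous linear derivatives $f \mapsto f_x \in H^{s-1}$, $f \mapsto f_{xx} \in H^{s-2}$, $\varphi \mapsto \varphi_x - 1 \in H^{s-1}$, and $\varphi \mapsto \varphi_{xx} \in H^{s-2}$; (b) the bilinear multiplication $H^{s-1} \times H^{s-1} \to H^{s-1}$, which is bounded because $s-1 > 1/2$ makes $H^{s-1}(\R)$ a Banach algebra; (c) the bilinear multiplication $H^{s-1} \times H^{s-2} \to H^{s-2}$ needed for the $f_x \varphi_{xx}$ term, justified when $3/2 < s < 2$ by the considerations in Appendix~\ref{appendix_b}; and (d) the scalar nonlinearity $J : \{h \in H^{s-1}(\R) : 1 + h > 0\} \to H^{s-1}(\R)$, $J(h) := (1+h)^{-1} - 1$, applied to $h = \varphi_x - 1$. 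The whole argument thus reduces to analyticity of $J$ on its (open) domain.

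For $J$ I would proceed in two steps. First, well-definedness: since $s - 1 > 1/2$, Sobolev embedding gives $H^{s-1} \hookrightarrow C_0(\R)$, so every $h$ in the domain satisfies $1 + h \ge c > 0$ uniformly; a Moser-type composition estimate applied to the smooth function $F(t) = -t/(1+t)$ (which vanishes at $0$ and is real-analytic on $(-1,\infty)$) then yields $J(h) = F \circ h \in H^{s-1}$ with norm controlled locally by $\|h\|_{H^{s-1}}$. Second, analyticity: at a fixed $h_0$ in the domain, formally
\[
J(h_0 + k) - J(h_0) = \sum_{n \ge 1} (-1)^n (1 + h_0)^{-(n+1)}\, k^n,
\]
and I would identify the right-hand side as a convergent power series in $H^{s-1}$. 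The main obstacle is bounding the multiplier norm of $(1 + h_0)^{-(n+1)}$ on $H^{s-1}$ by a geometric factor $R^{n+1}$. I would obtain this by writing $(1 + h_0)^{-1} = 1 + J(h_0)$ with $J(h_0) \in H^{s-1}$ from the first step, expanding $(1 + J(h_0))^{n+1}$ by the binomial theorem inside the algebra $1 + H^{s-1}$, and using the Banach algebra bound $\|a^n\|_{H^{s-1}} \le C^{n-1}\|a\|_{H^{s-1}}^n$ to read off a multiplier norm at most $(1 + C\|J(h_0)\|_{H^{s-1}})^{n+1}$. Combined with the same algebra bound applied to $k^n$, the series converges absolutely in $H^{s-1}$ on a small ball around $0$, presenting $J$ locally as the sum of a convergent power series with bounded symmetric multilinear coefficients, which is precisely analyticity.
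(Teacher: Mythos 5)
Your proof is correct and follows essentially the same route as the paper: compute the explicit algebraic formulas $\delta^{(1)}(\varphi,f)=f_x/\varphi_x$ and $\delta^{(2)}(\varphi,f)=f_{xx}/\varphi_x^2 - f_x\varphi_{xx}/\varphi_x^3$ so that all compositions cancel, then read off analyticity; the paper simply asserts that these are ``analytic expressions,'' whereas you supply the justification (Banach algebra multiplication, the extension to $H^{s-2}$ for $3/2<s<2$, and the power-series argument for $h\mapsto (1+h)^{-1}-1$), which is exactly what the assertion implicitly relies on. Note also that your formula for $\delta^{(2)}$ has the correct exponent $\varphi_x^2$ in the first denominator, where the paper's displayed formula contains a typo.
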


\begin{proof}
Consider first the case $k=1$. Then we have
\[
 \delta^{(1)}(\varphi,f)=R_\varphi \partial_x R_{\varphi^{-1}} f = \frac{f_x}{\varphi_x}
\]
and this is an analytic expression in $\varphi$ and $f$. Similarly for $k=2$ we have we have
\[
 \delta^{(2)}(\varphi,f)=R_\varphi \partial_x^2 R_{\varphi^{-1}} f = \frac{f_{xx}}{\varphi_x} - \frac{f_x \cdot \varphi_{xx}}{(\varphi_x)^3}
\]
which also holds in the case $s < 2$ -- see Appendix \ref{appendix_b} for the conventions in this case. We see from the expressions for $\delta^{(2)}(\varphi,f)$ that it is indeed analytic.
\end{proof}

\noindent
Now we can give the proof Theorem \ref{th_smooth_spray}.

\begin{proof}[Proof of Theorem \ref{th_smooth_spray}]
Consider the continuous symmetric bilinear map
\begin{eqnarray*}
B:H^s(\R) \times H^s(\R) &\to& H^{s-2}(\R)\\
(u,v) &\mapsto& \frac{1}{2} \big(-b u v_x - b v u_x \big) + \frac{1}{2} \big((b-3) u v_{xx} + (b-3) v u_{xx}\big).
\end{eqnarray*}
That the range of this expression is in $H^{s-2}(\R)$ and its continuity follow from the Banach algebra properties of $H^s(\R)$ -- see e.g. \cite{adams} (for $s<2$ see Appendix \ref{appendix_b}). From Lemma \ref{lemma_smooth_conjugation} we know that the map
\begin{eqnarray*}
\Ds^s(\R) &\to& L\left(H^s(\R), H^s(\R);H^{s-2}(\R)\right)\\
\varphi &\mapsto& \left[(u,v) \mapsto R_\varphi B(R_{\varphi^{-1}}u,R_{\varphi^{-1}}v) \right]
\end{eqnarray*}
is analytic. Again using Lemma \ref{lemma_smooth_conjugation} we get that
\begin{eqnarray*}
\Ds^s(\R) &\to& L\left(H^s(\R);H^{s-2}(\R)\right)\\
\varphi &\mapsto& \left[ u \mapsto R_\varphi (1-\partial_x^2)R_{\varphi^{-1}}u \right]
\end{eqnarray*}
is analytic. Hence
\begin{eqnarray*}
\Ds^s(\R) &\to& L\big(H^s(\R),H^{s-2}(\R)\big)\\
\varphi &\mapsto& \left[ u \mapsto R_\varphi (1-\partial_x^2) R_{\varphi^{-1}} u \right]
\end{eqnarray*}
is analytic. Note that $u \mapsto R_\varphi (1-\partial_x^2) R_{\varphi^{-1}}u$ is invertible with inverse given by $v \mapsto R_\varphi (1-\partial_x^2)^{-1} R_{\varphi^{-1}}v$. Now as inversion of linear operators is an analyitc process we see that the map
\begin{eqnarray*}
\Ds^s(\R) &\to& L\big(H^{s-2}(\R),H^s(\R)\big)\\
\varphi &\mapsto& \left[v \mapsto R_\varphi (1-\partial_x^2)^{-1} R_{\varphi^{-1}} v \right]
\end{eqnarray*}
is analytic. Piecing the maps together we get from the identity
\begin{eqnarray*}
 \Gamma_\varphi(u,v) &=& R_\varphi (1-\partial_x^2)^{-1} B(R_{\varphi^{-1}}u,R_{\varphi^{-1}}v)\\
&=& R_\varphi (1-\partial_x^2)^{-1} R_{\varphi^{-1}} R_\varphi B(R_{\varphi^{-1}}u,R_{\varphi^{-1}}u)
\end{eqnarray*}
that $\Gamma:\Ds^s(\R) \to L\left(H^s(\R),H^s(\R);H^s(\R)\right)$ is analytic. This completes the proof of the theorem.
\end{proof}

\noindent
Now consider the initial value problem (IVP)
\begin{equation}\label{geodesic_ivp}
\left\{
\begin{array}{c}
\varphi_{tt} = \Gamma_\varphi(\varphi_t,\varphi_t) \\
\varphi(0)=id \in \Ds^s(\R),\quad \varphi_t(0) = u_0 \in H^s(\R).
\end{array}
\right.
\end{equation}

\noindent
The Picard theorem gives us local solutions to the IVP \eqref{geodesic_ivp}. With this and the discussion at the and of section \ref{section_geometric} we get the following local existence result.

\begin{Lemma}\label{lemma_local_existence}
Let $s > 3/2$. Given $u_0 \in H^s(\R)$, there exists $u \in C^0\big([0,T],H^s(\R)\big)$, for some $T > 0$, such that
\begin{equation}\label{strong_solution}
 u(t)= u_0 + \int_0^t (1-\partial_x^2)^{-1} (-b u u_x + (b-3) u_x u_{xx}) - u u_x
\end{equation}
holds for all $t \in [0,T]$.
\end{Lemma}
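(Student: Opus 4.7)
The plan is to lift the problem to the diffeomorphism group, apply Picard's theorem to the geodesic equation, and then push the solution back down to obtain $u$.

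First, I would invoke Theorem \ref{th_smooth_spray}: since $S$ is analytic, the Christoffel map $\Gamma$ is smooth, so the ODE $\varphi_{tt} = \Gamma_\varphi(\varphi_t,\varphi_t)$ on the Banach manifold $\Ds^s(\R)\times H^s(\R)$ satisfies the Picard--Lindel\"of hypotheses. Hence there exist $T>0$ and a unique $\varphi\in C^2([0,T],\Ds^s(\R))$ solving the IVP \eqref{geodesic_ivp} with $\varphi(0)=\id$, $\varphi_t(0)=u_0$. In particular $\varphi_t\in C^1([0,T],H^s(\R))$ and $\varphi_{tt}\in C^0([0,T],H^s(\R))$.

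Next I would define $u(t):=\varphi_t(t)\circ\varphi(t)^{-1}$ and check that $u\in C^0([0,T],H^s(\R))$. This uses two regularity facts established in \cite{composition}: inversion $\varphi\mapsto\varphi^{-1}$ is continuous on $\Ds^s(\R)$, and the right translation map $H^s(\R)\times \Ds^s(\R)\to H^s(\R)$, $(f,\psi)\mapsto f\circ\psi$, is continuous (the $\omega$-lemma with $k=0$). Combining them shows $u\in C^0([0,T],H^s(\R))$.

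Then I would repeat the formal computation from Section \ref{section_geometric} rigorously in $H^{s-1}(\R)$. Differentiating $\varphi_t=u\circ\varphi$ in $t$ gives, in $H^{s-1}(\R)$,
\[
\varphi_{tt}=u_t\circ\varphi + (u_x\circ\varphi)\cdot(u\circ\varphi),
\]
so, composing with $\varphi^{-1}$ and using the geodesic equation together with the definition \eqref{gamma_phi} of $\Gamma_\varphi$, we obtain
\[
u_t = -u u_x + \Gamma_{\id}(u,u) = -uu_x + (1-\partial_x^2)^{-1}\bigl(-b u u_x + (b-3) u_x u_{xx}\bigr)
\]
in $C^0([0,T],H^{s-1}(\R))$. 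Integrating this identity from $0$ to $t$ yields \eqref{strong_solution}.

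The main obstacle is the regularity bookkeeping: one cannot hope for $u\in C^1([0,T],H^s)$ because composition is only $C^0$ in this direction, so the identity $u_t=\cdots$ must be understood in $H^{s-1}$ where $uu_x$ and $u_xu_{xx}$ live (the latter via the convention from Appendix \ref{appendix_b} when $3/2<s<2$). Care is also needed to justify that $(u_x\circ\varphi)\cdot(u\circ\varphi) = (uu_x)\circ\varphi$ as elements of $H^{s-1}(\R)$, and to commute $\int_0^t(\cdot)\circ\varphi^{-1}$ through the equality; both follow from the $H^{s-1}$-continuity of the relevant compositions together with the fact that the integrand lies in $C^0([0,T],H^{s-1}(\R))$, as already observed after \eqref{b_integral}.
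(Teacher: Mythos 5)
Your proposal follows essentially the same route as the paper: solve the geodesic IVP \eqref{geodesic_ivp} by Picard using the analyticity of the spray (Theorem \ref{th_smooth_spray}), set $u=\varphi_t\circ\varphi^{-1}$, get $u\in C^0([0,T],H^s(\R))$ from the continuity of inversion and composition, and then derive the integral identity \eqref{strong_solution}. The one place where your sketch is weaker than the paper's argument is the step ``differentiating $\varphi_t=u\circ\varphi$ in $t$ gives, in $H^{s-1}(\R)$, $\varphi_{tt}=u_t\circ\varphi+(u_x\circ\varphi)\cdot(u\circ\varphi)$'': this presupposes that $t\mapsto u(t)$ is already differentiable into some Sobolev space, which is not known at that point and is in fact part of what is being proved, so as written the chain rule is circular. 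The paper avoids this by observing, via the Sobolev imbedding, that $u$ and $\varphi$ are $C^1$ as functions on $[0,T]\times\R$, carrying out the chain-rule computation pointwise in the classical sense, applying the fundamental theorem of calculus pointwise, and only then upgrading the resulting integral identity to $H^{s-1}(\R)$ using that the integrand lies in $C^0([0,T],H^{s-1}(\R))$; you should insert this pointwise detour (or an equivalent difference-quotient argument) to make your differentiation step legitimate.
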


\begin{proof}
Consider the IVP \eqref{geodesic_ivp}. Since $\Gamma$ is smooth, there exists $T > 0$, such that we have a
\[
 \varphi \in C^\infty\big([0,T],\Ds^s(\R)\big)
\]
solving \eqref{geodesic_ivp}. We claim that $u:=\varphi_t \circ \varphi^{-1}$ is a solution to \eqref{strong_solution}. From \cite{composition} we know that $u \in C^0([0,T],H^s(\R))$. We also have $u(0)=\varphi_t(0)=u_0$. From the Sobolev imbedding theorem we see that $u$ resp. $\varphi$ are in $C^1([0,T] \times \R)$. Taking the $t$-derivative of $u \circ \varphi$ we get
\[
 u_t \circ \varphi + u_x \circ \varphi \cdot \varphi_t = u_t \circ \varphi + u_x \circ \varphi \cdot u \circ \varphi.
\]
On the other hand we have
\[
 (u \circ \varphi)_t = \varphi_{tt} = \Gamma_\varphi(\varphi_t,\varphi_t)
\]
as functions on $[0,T] \times \R$. Thus we get
\[
 u_t \circ \varphi + u_x \circ \varphi \cdot u \circ \varphi = \Gamma_\varphi(\varphi_t,\varphi_t)
\]
or
\begin{eqnarray*}
 u_t &=& \Gamma_\varphi(\varphi_t,\varphi_t) \circ \varphi^{-1} - u u_x\\
&=& (1-\partial_x^2)^{-1} (-b u u_x + (b-3)u_x u_{xx}) - u u_x
\end{eqnarray*}
as functions on $[0,T] \times \R$. As both sides are continuous functions, we get by the fundamental lemma of calculus for $t \in [0,T]$
\begin{equation}\label{pointwise}
 u(t)=u_0 + \int_0^t (1-\partial_x^2)^{-1} (-b u u_x + (b-3)u_x u_{xx}) - u u_x.
\end{equation}
But as the integrand is in $C^0([0,T],H^{s-1}(\R))$, the identity \eqref{pointwise} holds also in $H^{s-1}(\R)$.
\end{proof}

To get uniqueness we use the fact that for $u \in C\left([0,T];H^s(\R)\right)$ there is a unique flow $\varphi \in C^1\left([0,T];\D^s(\R)\right)$, i.e. $\varphi$ solving
\[
 \varphi_t = u \circ \varphi,\quad \varphi(0)=id
\]
-- see \cite{thesis}. With this we will prove
\begin{Lemma}\label{lemma_uniqueness}
Let $s > 3/2$. Assume that we have two solutions $u,w \in C^0\big([0,T],H^s(\R)\big)$ to the Cauchy problem \eqref{b_integral} with $u(0)=w(0)=u_0 \in H^s(\R)$. Then we have actually $u=w$ on $[0,T]$. 
\end{Lemma}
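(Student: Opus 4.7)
The plan is to lift the PDE uniqueness problem to an ODE uniqueness problem on the Hilbert manifold $\Ds^s(\R)$, where we can invoke Picard uniqueness for the geodesic spray of Theorem \ref{th_smooth_spray}. Concretely, to each solution $u \in C^0([0,T],H^s(\R))$ we associate the flow $\varphi:[0,T]\to\Ds^s(\R)$ determined by
\[
\varphi_t = u \circ \varphi,\qquad \varphi(0)=\id,
\]
whose unique existence in $C^1([0,T],\Ds^s(\R))$ is granted by the result from \cite{thesis} cited just above. Doing the same for $w$ we get a flow $\psi$. The claim is that both $\varphi$ and $\psi$ solve the second-order ODE \eqref{geodesic_ivp} with identical initial data $(\id,u_0)$; uniqueness of solutions to the ODE associated to the analytic spray $S$ of Theorem \ref{th_smooth_spray} then forces $\varphi=\psi$, and hence $u=\varphi_t\circ\varphi^{-1}=\psi_t\circ\psi^{-1}=w$.

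The main step is therefore to verify the geodesic equation for $\varphi$. Differentiating $\varphi_t = u\circ\varphi$ once more in time and using the chain rule (which is licit here because $u\in C^0([0,T],H^s(\R))$ satisfies \eqref{b_integral}, so that $u_t\in C^0([0,T],H^{s-1}(\R))$ and, by the Sobolev embedding, $u\in C^1([0,T]\times\R)$) one obtains pointwise
\[
\varphi_{tt} = u_t\circ\varphi + (u_x\circ\varphi)\cdot\varphi_t
             = (u_t + u u_x)\circ\varphi.
\]
Now the PDE satisfied by $u$ reads exactly $u_t+uu_x = (1-\p_x^2)^{-1}(-buu_x+(b-3)u_xu_{xx}) = \Gamma_{\id}(u,u)$ (see \eqref{gamma_id}), so that
\[
\varphi_{tt} = \Gamma_{\id}(u,u)\circ\varphi
            = \Gamma_{\id}(\varphi_t\circ\varphi^{-1},\varphi_t\circ\varphi^{-1})\circ\varphi
            = \Gamma_\varphi(\varphi_t,\varphi_t),
\]
by the very definition \eqref{gamma_phi} of $\Gamma_\varphi$. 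Together with $\varphi(0)=\id$ and $\varphi_t(0)=u\circ\id=u_0$ this says that $\varphi$ is a solution of \eqref{geodesic_ivp}. The identical argument applied to $w$ produces a $\psi$ solving the same initial value problem.

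Finally, by Theorem \ref{th_smooth_spray} the spray $S$ is analytic on $\Ds^s(\R)\times H^s(\R)$, so the Picard--Lindel\"of theorem on Banach manifolds (see \cite{Lang}) yields uniqueness for \eqref{geodesic_ivp}; hence $\varphi=\psi$ on $[0,T]$, and consequently $u=w$ on $[0,T]$. The only delicate point in this outline is justifying the chain-rule computation for $\varphi_{tt}$ with the rather low regularity at hand; the key input is that the integrand in \eqref{b_integral} lies in $C^0([0,T],H^{s-1}(\R))$, so $u_t$ is a genuine continuous function and $u_t\circ\varphi$ makes sense by the $\omega$-lemma in $H^{s-1}(\R)$ (which requires $s-1>1/2$, precisely our hypothesis $s>3/2$).
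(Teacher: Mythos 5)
Your proposal is correct and follows essentially the same route as the paper: associate to each solution its flow via the result from \cite{thesis}, show the flow satisfies the geodesic equation for the spray of Theorem \ref{th_smooth_spray}, and conclude by ODE uniqueness. The paper handles the regularity point you flag by writing the relation in integral form, $\varphi_t(t)=u_0+\int_0^t\Gamma_\varphi(\varphi_t\circ\varphi^{-1},\varphi_t\circ\varphi^{-1})$, and noting the integrand is in $C^0([0,T],H^s(\R))$ so that $\varphi\in C^2([0,T],\Ds^s(\R))$ — which is exactly the justification your sketch anticipates.
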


\begin{proof}
Consider the flows $\varphi$ resp. $\psi$ in $C^1\big([0,T],\Ds^s(\R)\big)$ corresponding to $u$ resp. $w$ as discussed above. We will show that $\varphi$ resp. $\psi$ are geodesics. By the uniqueness of geodesics with the same initial condition we will get $\varphi=\psi$ resp. $\varphi_t \circ \varphi^{-1} = \psi_t \circ \psi^{-1}$ or equivalently $u=w$. Now consider $u \circ \varphi$ which is $C^1$ in $[0,T] \times \R$. Taking the $t$-derivative we get pointwise
\[
 \frac{d}{dt} (u \circ \varphi) = u_t \circ \varphi + u_x \circ \varphi \cdot u \circ \varphi.
\]
Since $u$ is a solution of the Cauchy problem we have
\begin{eqnarray*}
 \frac{d}{dt} (u \circ \varphi) &=& R_\varphi \big((1-\partial_x^2)^{-1} (-b u u_x + (b-3)u_x u_xx)\big)\\
&=& \Gamma_\varphi(\varphi_t \circ \varphi^{-1},\varphi_t \circ \varphi^{-1})
\end{eqnarray*}
where $\Gamma$ is as in \eqref{gamma_phi}. From the fundamental lemma of calculus we get
\[
 \varphi_t(t)=u_0 + \int_0^t \Gamma_\varphi(\varphi_t \circ \varphi^{-1},\varphi_t \circ \varphi^{-1}).
\]
But as the integrand is in $C^0\big([0,T],H^s(\R)\big)$ we see that $\varphi \in C^2\big([0,T],\Ds^s(\R)\big)$ and that it is a geodesic. Hence the claim.
\end{proof}

Using Lemma \ref{lemma_local_existence} and Lemma \ref{lemma_uniqueness} we get 

\begin{Th}\label{th_local_well_posedness}
Let $s > 3/2$. The Cauchy problem \eqref{b_integral} is locally well-posed in $H^s(\R)$, i.e. given $u_0 \in H^s(\R)$ there exists a solution $u \in C^0\big([0,T],H^s(\R)\big)$ for some $T > 0$. Further $u$ is unique on $[0,T]$ and the correspondence $u_0 \to u$ is continuous.
\end{Th}

\begin{Rem}
The correspondence $u_0 \to u$ is meant as a map
\[
 U_T \to C^0\big([0,T],H^s(\R)\big),\quad u_0 \mapsto u
\]
\end{Rem}

This result is not new. This was e.g. done in \cite{Olver}. They looked at a regularized version of \eqref{cauchy_problem} and took the limit. Also in \cite{Rodriguez} there is a similar result. They use Kato's abstract semigroup method. The same method is also used in \cite{periodic_DP} for the periodic case. We used the geometric setting as was e.g. done in \cite{geometric} to achieve local well-posedness via the Picard theorem.

\section{Non-uniform dependence of the solution map}\label{section_nonuniform}

In this section we will prove the non-uniform dependence of the solution map, i.e. we will prove Theorem \ref{th_nonuniform}. Recall that we denoted for $T > 0$ the set $U_T \subseteq H^s(\R)$ to be those $u_0$ for which we have existence beyond $T$. Note also that we have the following scaling property for equation \eqref{b_family}: If $u$ is a solution then for $\lambda > 0$ 
\[
 v(x,t):=\lambda u(x,\lambda t)
\]
is also a solution and $U_{\lambda T}=\frac{1}{\lambda} U_T$. Therefore it suffices to consider the case $T=1$ to prove Theorem \ref{th_nonuniform}. Hence the theorem will follow from

\begin{Prop}\label{prop_nonuniform}
Let $s > 3/2$ and $U:=\left. U_T \right|_{T=1}$. Denote by $\Phi$ the time-one map $\Phi:U \to H^s(\R), u_0 \mapsto u(1)$ for the Cauchy problem \eqref{b_family}. Then $\Phi$ is nowhere locally uniformly continuous, i.e. for any non-empty $V \subseteq U$ the restriction $\left. \Phi \right|_V$ is not uniformly continuous.
\end{Prop}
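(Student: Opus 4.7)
My plan is to prove non-uniform continuity of $\Phi$ in any neighborhood $V$ of an arbitrary $u_0 \in U$ by constructing two bounded sequences $(u_n), (\tilde u_n)$ in $V$ satisfying $\|u_n - \tilde u_n\|_s \to 0$ and $\|\Phi(u_n) - \Phi(\tilde u_n)\|_s \geq c > 0$. The main tool is the factorization provided by the geometric framework: writing $\Phi = E \circ A$ with
\[
A : U \to \Ds^s(\R) \times H^s(\R), \quad u_0 \mapsto (\varphi^{u_0}(1),\,\varphi_t^{u_0}(1)), \qquad E(\psi,w) := w \circ \psi^{-1},
\]
Theorem \ref{th_smooth_spray} and the analytic dependence of ODE solutions on their initial data make $A$ an analytic map, and time-reversibility of the geodesic flow on $T\Ds^s(\R)$ makes $A$ a local analytic diffeomorphism onto its image $\mathscr{M} \subset \Ds^s(\R) \times H^s(\R)$; in particular $A$ is locally bi-Lipschitz. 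The Eulerianization $E$, however, is only continuous: for $w \in H^s(\R) \setminus H^{s+1}(\R)$ the map $\psi \mapsto w \circ \psi^{-1}$ loses one derivative via the $\omega$-lemma and fails to be uniformly continuous on bounded subsets of $\Ds^s(\R)$. All the failure of uniform continuity in $\Phi$ therefore must come from $E$.

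By the bi-Lipschitz property of $A$, the task reduces to producing sequences $(\psi_n,w_n), (\tilde\psi_n,\tilde w_n) \in \mathscr{M}$, bounded in $\Ds^s \times H^s$, with $\|\psi_n - \tilde\psi_n\|_s + \|w_n - \tilde w_n\|_s \to 0$ but $\|E(\psi_n,w_n) - E(\tilde\psi_n,\tilde w_n)\|_s \geq c$; these are then pulled back through $A^{-1}$ to initial data in $V$. The concrete realization I would follow is the high-frequency wavetrain construction used in \cite{general_b} for the $b$-family. Fix a bump $\chi \in C_c^\infty(\R)$ with $\chi(0) > 0$ and set
\[
u_n := v_n^{(1)} + n^{-s}\cos(nx)\,\chi(x), \qquad \tilde u_n := v_n^{(2)} + n^{-s}\cos(nx)\,\chi(x),
\]
where $v_n^{(1)}, v_n^{(2)} \in V$ are low-frequency profiles close to $u_0$ in $H^s$, chosen so that $\|v_n^{(1)} - v_n^{(2)}\|_s \to 0$ but their local characteristic speeds at the support of $\chi$ differ by an amount of order $n^{-1}$. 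The transport $-u u_x$ in \eqref{b_rewritten} then propagates the cosine riders over $[0,1]$ by shifts differing by order $n^{-1}$, which produces an $\Omega(1)$ phase difference in the high-frequency part of $\Phi(u_n) - \Phi(\tilde u_n)$, and hence a lower bound $\|\Phi(u_n) - \Phi(\tilde u_n)\|_s \geq c$ after the $n^s$ weight in the $H^s$-norm is accounted for.

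The main obstacle is the rigorous WKB-type propagation of the high-frequency ansatz up to time one. Concretely, one needs quantitative control over how the cosine rider is carried by the geodesic flow in $\Ds^s(\R)$, showing that to leading order the effect is a phase shift determined by the low-frequency base $v_n^{(k)}$, with $H^s$-remainder terms that vanish as $n \to \infty$, despite the coupling of the two frequency regimes and the $(1-\p_x^2)^{-1}$-smoothed forcing appearing in the Christoffel map of Theorem \ref{th_smooth_spray}. The analyticity of $A$ and the composition estimates of Lemma \ref{lemma_smooth_conjugation} are the right tools to linearize the propagation around each $v_n^{(k)}$ and to control the errors in $H^s$. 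The core quantitative point is to extract the nonzero leading-order derivative of the accumulated phase shift with respect to the base profile, which is what forces the $\Omega(n^{-1})$ phase gap that survives multiplication by the $n^s$ Sobolev weight.
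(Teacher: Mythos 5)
Your opening reduction is sound and close in spirit to the paper's: the paper likewise pushes all the blame onto an Eulerian composition operator, the Lagrangian data $\varphi(1)=\exp(u_0)$ depending analytically on $u_0$. In fact the paper gets a cleaner reduction than your factorization $\Phi=E\circ A$: by the conservation law \eqref{conserved_rewritten}, $y(1)=\bigl(y_0/\varphi_x(1)^b\bigr)\circ\varphi(1)^{-1}$ with $y=(1-\partial_x^2)u$, so the time-one map is an \emph{explicit} composition-type operator in $y_0$ and $\exp(u_0)$, and no information about $\varphi_t(1)$ or about membership in the image manifold of your map $A$ is ever needed. This matters, because in your plan the sequences are ultimately specified as initial data $u_n,\tilde u_n$ rather than as points of the image of $A$, so the factorization does not actually discharge any work: you are back to propagating data through the full nonlinear flow up to time one.

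That is where the genuine gap lies. The entire quantitative content of your argument is the WKB-type propagation of the riders $n^{-s}\cos(nx)\chi(x)$ over $[0,1]$, with $H^s$-errors $o(1)$, around an \emph{arbitrary} base profile near $u_0$ (the statement is ``nowhere'' locally uniformly continuous, so you cannot specialize to bases near $0$ as in \cite{general_b}). You name this as ``the main obstacle'' and do not carry it out; as written the proposal is a program, not a proof. The paper avoids this analysis entirely: it perturbs the initial data by bumps $w_n$ of fixed $H^s$-size $R/4$ but shrinking support $(x_0-r_n/L,x_0+r_n/L)$ with $r_n\sim 1/n$, and by a perturbation $v_n=v/n$ chosen so that $d_{u_0}\exp(v)(x_0)>m\|v\|_s$; a second-order Taylor expansion of $\exp$ then shows $|\varphi_n(x_0)-\tilde\varphi_n(x_0)|\geq \tfrac{m}{2n}\|v\|_s\geq 4r_n$, so the images of the bump supports under $\varphi_n$ and $\tilde\varphi_n$ are disjoint, and the scale-invariant disjoint-support inequalities (Lemmas \ref{lemma_vanishing_support1} and \ref{lemma_vanishing_support2}) give the uniform lower bound $\gtrsim R$. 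If you want to complete your route, you would either need to import and re-prove the full wavetrain machinery of \cite{general_b} localized at a general $u_0$, or switch to the conservation-law formula \eqref{conserved_rewritten}, after which the shrinking-support construction is the natural (and far shorter) way to exhibit the failure of uniform continuity of composition.
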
  

To prove Proposition \ref{prop_nonuniform} we will use a conserved quantity (cf \cite{escher_kolev}, Proposition 9). Consider equation \eqref{b_rewritten} and let $u$ be a solution, $\varphi$ its corresponding flow. Then we have, omitting the arguments $(t,x)$,
\begin{Lemma}\label{lemma_conserved}
Let $y:=(1-\partial_x^2)u$. Then we have for all $t$ the following identity in $H^{s-2}(\R)$
\begin{equation}\label{conserved} 
 y \circ \varphi \cdot (\varphi_x)^b = y(t=0)
\end{equation}
\end{Lemma}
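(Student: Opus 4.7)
The plan is to recast equation \eqref{b_family} as a pure transport equation for the ``momentum'' $y := (1-\partial_x^2)u = u - u_{xx}$, and then differentiate the claimed conserved quantity $y\circ\varphi\cdot(\varphi_x)^b$ directly in $t$ along the flow. This is the classical momentum-density argument familiar from the Camassa--Holm case ($b=2$), generalized by the extra factor $(\varphi_x)^b$ to account for the parameter $b$.

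\textbf{Step 1 (transport form).} I would first verify the identity
\[
 y_t + u y_x + b\, u_x\, y = 0
\]
in $H^{s-2}(\R)$. Using $y_t = u_t - u_{xxt}$ and reading the right-hand side off \eqref{b_family}, together with $y_x = u_x - u_{xxx}$ and $b u_x y = b u u_x - b u_x u_{xx}$, every term cancels: the $u u_{xxx}$ cancels with $-u y_x$, the $b u_x u_{xx}$ cancels with $-b u_x y$, and the remaining $uu_x$-coefficient is $-(b+1)+1+b=0$. This is a routine algebraic check.

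\textbf{Step 2 (differentiate along the flow).} Set $Y(t,x) := y(t,\varphi(t,x))\cdot\varphi_x(t,x)^b$. Since $\varphi_t = u\circ\varphi$, differentiating in $x$ gives $\varphi_{tx} = (u_x\circ\varphi)\cdot\varphi_x$. Then the product/chain rule yields
\[
 Y_t = \bigl[(y_t + u y_x)\circ\varphi\bigr]\,\varphi_x^b + (y\circ\varphi)\cdot b\,\varphi_x^{b-1}\,(u_x\circ\varphi)\,\varphi_x = \bigl[(y_t + u y_x + b u_x y)\circ\varphi\bigr]\varphi_x^b,
\]
which vanishes by Step~1. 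Integrating in $t$ with $\varphi(0)=\id$ and $\varphi_x(0)\equiv 1$ gives $Y(t) = Y(0) = y(0)$, which is the claim.

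\textbf{Main obstacle.} The delicate point is the low regularity. For $s$ only slightly above $3/2$, $y$ lives in $H^{s-2}(\R)$, which can be a negative-index Sobolev space, so the pointwise chain rule requires justification. I would handle this by approximation: pick $u_0^\varepsilon \in H^\infty(\R)$ with $u_0^\varepsilon \to u_0$ in $H^s(\R)$. For smooth data, $u^\varepsilon$ and the associated flow $\varphi^\varepsilon$ are classical and the computation above is literal, giving conservation of $Y^\varepsilon$. One then passes to the limit using the continuous dependence of the solution on the initial data from Theorem~\ref{th_local_well_posedness}, the $\omega$-lemma continuity of $(f,\varphi)\mapsto f\circ\varphi$ on $H^{s-2}(\R)\times\Ds^s(\R)$, and the fact that $H^{s-1}(\R)$ is a Banach algebra acting continuously on $H^{s-2}(\R)$, which makes the product $(y\circ\varphi)\cdot \varphi_x^b$ converge in $H^{s-2}(\R)$. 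Alternatively one can argue directly in $H^{s-2}(\R)$: $y\in C^0([0,T],H^{s-2}(\R))$, $\varphi\in C^1([0,T],\Ds^s(\R))$, and $\varphi_x^b - 1 \in C^1([0,T],H^{s-1}(\R))$, so the whole identity $Y_t = 0$ holds in $H^{s-2}(\R)$ by the smoothness of the composition and multiplication maps established in Section~\ref{section_local_wellposed}.
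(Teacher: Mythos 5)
Your formal computation is correct and it is the classical Eulerian momentum argument: you first derive the transport law $y_t + u y_x + b u_x y = 0$ from \eqref{b_family} (the cancellation you describe does check out) and then observe that $\tfrac{d}{dt}\big[(y\circ\varphi)\,\varphi_x^b\big]$ is exactly $\big[(y_t+uy_x+bu_xy)\circ\varphi\big]\varphi_x^b$. The paper takes a genuinely different, purely Lagrangian route: it never forms $y_t$ or $y_x$ at all. Instead it writes $y\circ\varphi = R_\varphi(1-\partial_x^2)R_{\varphi^{-1}}\varphi_t = \varphi_t - \varphi_{txx}/\varphi_x^2 + \varphi_{tx}\varphi_{xx}/\varphi_x^3$ using Lemma \ref{lemma_smooth_conjugation}, differentiates this explicit analytic expression in $t$, and substitutes the geodesic equation \eqref{geodesic_ivp}. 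What that buys is that every object in sight is an analytic function of the $C^\infty$ curve $t\mapsto(\varphi,\varphi_t)$ with values in $H^{s-2}(\R)$, so the time differentiation is licit for all $s>3/2$ with no limiting argument. Your route is more transparent but pushes all the difficulty into the regularity justification.

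That justification is where your proposal has a real gap, and you have correctly identified the location but not closed it. Your ``direct'' alternative does not work as stated: the composition $(f,\varphi)\mapsto f\circ\varphi$ on $H^{s-2}(\R)\times\Ds^s(\R)$ is \emph{not} continuous, let alone smooth (Remark \ref{remark_weak_continuity}); moreover for $3/2<s<2$ the product $u\,y_x$ involves $y_x\in H^{s-3}(\R)$, which lies outside the range covered by the multiplication extension of Appendix \ref{appendix_b}. The saving observation --- that $y\circ\varphi$ is not a generic composition but the conjugated operator $R_\varphi(1-\partial_x^2)R_{\varphi^{-1}}$ applied to $\varphi_t$, hence analytic in $(\varphi,\varphi_t)$ --- is precisely what your sketch omits. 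Your approximation route is salvageable (weak convergence of $y^\varepsilon\circ\varphi^\varepsilon$ suffices to pass the identity to the limit), but it silently uses persistence of regularity: to make the classical computation ``literal'' on a fixed interval $[0,T]$ you need the smooth approximating solutions $u^\varepsilon$ to remain smooth for as long as they exist in $H^s(\R)$, i.e.\ the $s$-independence of the lifespan. That fact is true for this family but is not established anywhere in the paper as it stands, so it would have to be proved as an additional lemma before your argument is complete.
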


\begin{proof}
Taking the $t$-derivate of $y \circ \varphi$ we get
\begin{multline}\label{w_t}
\frac{d}{dt} y \circ \varphi = \varphi_{tt} - \frac{\varphi_{ttxx}}{\varphi_x^2} + 2 \frac{\varphi_{txx} \cdot \varphi_{tx}}{\varphi_x^3} + \frac{\varphi_{ttx} \cdot \varphi_{xx}}{\varphi_x^3}\\ + \frac{\varphi_{tx} \cdot \varphi_{txx}}{\varphi_x^3} - 3 \frac{\varphi_{tx} \cdot \varphi_{xx} \cdot \varphi_{tx}}{\varphi_x^4}
\end{multline}
On the other hand equation \eqref{geodesic_ivp} gives
\begin{multline*}
R_\varphi (1-\partial_x^2)(\varphi_{tt} \circ \varphi^{-1}) = \\ R_\varphi \big(-b (\varphi_t \circ \varphi^{-1}) \cdot (\varphi_t \circ \varphi^{-1})_x + (b-3) (\varphi_t \circ \varphi^{-1})_x \cdot (\varphi_t \circ \varphi^{-1})_{xx}\big)
\end{multline*}
Expanding this equation we get
\begin{multline}\label{geodesic_expanded}
\varphi_{tt} - \frac{\varphi_{ttxx}}{\varphi_x^2} + \frac{\varphi_{ttx} \cdot \varphi_{xx}}{\varphi_x^3} = -b \frac{\varphi_t \cdot \varphi_{tx}}{\varphi_x}
+ (b-3) \frac{\varphi_{tx}}{\varphi_x} \left( \frac{\varphi_{txx}}{\varphi_x^2} - \frac{\varphi_{tx} \cdot \varphi_{xx}}{\varphi_x^3}\right)
\end{multline}

Hence from \eqref{w_t}
\begin{multline}\label{total_derivative}
\frac{d}{dt} \left[ (R_\varphi y) \cdot \varphi_x^b \right] = \Big[ \varphi_{tt} - \frac{\varphi_{ttxx}}{\varphi_x^2} + 3 \frac{\varphi_{txx} \cdot \varphi_{tx}}{\varphi_x^3} + \frac{\varphi_{ttx} \cdot \varphi_{xx}}{\varphi_x^3}- 3 \frac{\varphi_{tx}^2 \cdot \varphi_{xx}}{\varphi_x^4} \Big] \cdot \varphi_x^b\\ + \Big[ \varphi_t - \frac{\varphi_{txx}}{\varphi_x^2} + \frac{\varphi_{tx} \cdot \varphi_{xx}}{\varphi_x^3} \Big] \cdot b \varphi_x^{b-1} \varphi_{tx}
\end{multline}
Combining \eqref{total_derivative} and \eqref{geodesic_expanded} we get
\[
 \frac{d}{dt} \left[ (R_\varphi y) \cdot \varphi_x^b \right] = 0
\]
hence $[0,T] \to H^{s-2}(\R),t \mapsto y \circ \varphi \cdot \varphi_x^b$ is constant, i.e. \eqref{conserved} holds.
\end{proof}

\noindent

As $1-\partial_x^2:H^s(\R) \to H^{s-2}(\R)$ is an isomorphism, it will be enough to establish that $y_0 \mapsto y(1)$ is nowhere locally uniformly continuous in order to show Proposition \ref{prop_nonuniform}. We will use \eqref{conserved} in the form
\begin{equation}\label{conserved_rewritten}
y(1) = \left(\frac{y_0}{\varphi_x(1)^b}\right) \circ \varphi(1)^{-1}
\end{equation}
The approach is as in \cite{solution_map}. The idea is to produce a slight change $\tilde \varphi(1)$ so that $\tilde y_0$ doesn't change much, but $\tilde y(1)$ does. Since composition behaves bad this works. To make such perturbations we will employ the properties of the exponential map. But first we have to establish some preleminary lemmas.

\begin{Lemma}\label{lemma_composition_bounded_below_above}
Given $\varphi_\bullet \in \Ds^s(\R)$ there exists a neighborhood $W$ of $\varphi_\bullet$ in $\Ds^s(\R)$, such that for some constant $C>0$ we have
\[
\frac{1}{C} ||y||_{s-2} \leq ||R_\varphi^{-1} (y/\varphi_x^b)||_{s-2} \leq C ||y||_{s-2}
\]
for all $y \in H^{s-2}(\R)$ and $\varphi \in W$.
\end{Lemma}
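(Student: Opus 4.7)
The plan is to view the operator $A_\varphi : H^{s-2}(\R) \to H^{s-2}(\R)$, $y \mapsto R_{\varphi^{-1}}(y/\varphi_x^b)$, as a bounded linear isomorphism whose operator norm and inverse-norm depend continuously on $\varphi \in \Ds^s(\R)$, and then to choose $W$ so small that both are bounded by a common constant $C$ on $W$. The upper bound in the lemma is then immediate, and for the lower bound one uses the explicit inverse $A_\varphi^{-1}z = (R_\varphi z) \cdot \varphi_x^b$ together with
\[
\|y\|_{s-2} = \|A_\varphi^{-1} A_\varphi y\|_{s-2} \leq C\,\|A_\varphi y\|_{s-2}.
\]

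To obtain the continuity of $\varphi \mapsto A_\varphi$ I would factor $A_\varphi = R_{\varphi^{-1}} \circ M_\varphi$, where $M_\varphi$ denotes multiplication by $\varphi_x^{-b}$. Since $\varphi_x - 1 \in H^{s-1}(\R)$ with $s-1 > 1/2$, and $\varphi_x$ stays uniformly positive on a small neighborhood of $\varphi_\bullet$, a Moser-type estimate applied to the analytic function $t \mapsto (1+t)^{-b}$ (for real $b$ this goes through $\log$ and $\exp$, both analytic on the relevant range) yields $\varphi_x^{-b} - 1 \in H^{s-1}(\R)$ with analytic dependence on $\varphi$. Because $H^{s-1}(\R)$ acts continuously on $H^{s-2}(\R)$ by pointwise multiplication (valid since $s-1 > \max(1/2,|s-2|)$; for $3/2 < s < 2$ see Appendix \ref{appendix_b}), this yields continuity of $\varphi \mapsto M_\varphi$ into $L\big(H^{s-2}(\R),H^{s-2}(\R)\big)$.

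For the composition factor I would combine the change-of-variables identity $\int_\R |f \circ \varphi^{-1}|^2 \, dx = \int_\R |f|^2 \, \varphi_x \, dx$ with the chain rule and the isomorphism $1 - \partial_x^2 : H^s(\R) \to H^{s-2}(\R)$ to show that $R_{\varphi^{-1}} : H^{s-2}(\R) \to H^{s-2}(\R)$ is bounded with norm varying continuously in $\varphi$. For $s-2 \in (-1/2,0)$ the estimate is transported to the distributional level by duality against $C_c^\infty$ test functions composed with $\varphi$, following the conventions of Appendix \ref{appendix_b}. An entirely analogous argument applied to $A_\varphi^{-1}$ gives its continuous dependence on $\varphi$, after which a sufficiently small neighborhood $W$ of $\varphi_\bullet$ furnishes the uniform constant $C$.

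The main obstacle is the low-regularity regime $3/2 < s < 2$, where $H^{s-2}$ consists of genuine distributions and both multiplication by $\varphi_x^{-b}$ and composition with $\varphi^{-1}$ must be reinterpreted through the conventions of Appendix \ref{appendix_b}; the boundedness of these operators at the $H^{s-2}$ level, together with the continuity of their dependence on $\varphi$, does not quite fall out of the standard $\alpha$- and $\omega$-lemmas and requires the extra care outlined above. Once this is secured, the two-sided bound on $W$ is automatic.
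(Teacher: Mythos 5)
Your strategy is sound and reaches the same conclusion, but it takes a genuinely different route from the paper. Both proofs reduce the lower bound to the upper bound for the inverse operator via the same substitution $\|y\|_{s-2}=\|A_\varphi^{-1}A_\varphi y\|_{s-2}\leq C\|A_\varphi y\|_{s-2}$; the difference lies entirely in how the locally uniform operator-norm bounds are obtained. The paper uses a soft functional-analytic argument: by Remark \ref{remark_weak_continuity} the map $\varphi\mapsto R_{\varphi}^{-1}(y/\varphi_x^b)$ is weakly continuous for each fixed $y$, so along any sequence $\varphi_n\to\varphi_\bullet$ the images are weakly convergent, hence bounded, and the uniform boundedness principle immediately yields a neighborhood on which the operator norms are uniformly bounded --- no change of variables, Moser estimates, or duality computations are needed. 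Your proof replaces this with hard, quantitative estimates on each factor of $A_\varphi=R_{\varphi^{-1}}\circ M_\varphi$, which is more work but more explicit and self-contained. One caveat: you should claim only \emph{local boundedness} of $\varphi\mapsto\|R_{\varphi^{-1}}\|_{L(H^{s-2},H^{s-2})}$, not continuity. Norm-continuity of $\varphi\mapsto R_{\varphi^{-1}}$ as an operator-valued map fails (composition at these indices is only weakly continuous, as the paper stresses, and even translation operators do not depend norm-continuously on the shift), and continuity of the scalar function $\varphi\mapsto\|R_{\varphi^{-1}}\|_{op}$ is neither needed nor what your change-of-variables and duality estimates deliver; what they do deliver are bounds in terms of $\|\varphi_x\|_\infty$, $\inf\varphi_x$ and $\|\varphi_x-1\|_{s-1}$, which are locally uniformly controlled --- and that local boundedness is exactly what the choice of $W$ requires. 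The continuity claim for $M_\varphi$ is fine as stated, since it factors through the continuous map $\varphi\mapsto\varphi_x^{-b}-1\in H^{s-1}(\R)$ followed by the bounded linear inclusion of $H^{s-1}(\R)$ into the multiplier algebra of $H^{s-2}(\R)$.
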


\begin{proof}
First we establish the second inequality. For $\varphi_n \to \varphi_\bullet$ in $\Ds^s(\R)$ we have (see Remark \ref{remark_weak_continuity}) $R_{\varphi_n}^{-1}(y/\varphi_x^b)$ converges weakly to $y$ in $H^{s-2}(\R)$. By the uniform boundedness principle we get a neighborhood $W_1$ of $\varphi_\bullet$ in $\Ds^s(\R)$ and $C_1 > 0$ such that
\[
 ||R_\varphi^{-1} (y/\varphi_x^b) ||_{s-2} \leq C ||y||_{s-2}
\]
for all $\varphi \in W_1$ and $y \in H^{s-2}(\R)$. Now consider the first inequality. As in the first part we get a $W_2$ and $C_2 >0$ with
\[
 ||\varphi_x^b R_\varphi y||_{s-2} \leq C_2 ||y||_{s-2}
\]
for all $\varphi \in W_2$ and $y \in H^{s-2}(\R)$. Taking for $y$ the expression $R_\varphi^{-1} (y/\varphi_x^b)$ gives the first inequalty.
\end{proof}

\begin{Lemma}\label{lemma_dexp}
Let $s > 3/2$. The set of $u_0 \in U \cap H^{s+1}$ with $d_{u_0} \exp \neq 0$ is dense in $U$.
\end{Lemma}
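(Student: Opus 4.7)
The plan is to combine three ingredients: density of $H^{s+1}(\R)$ in $H^s(\R)$, the star-shapedness of $U$ around $0$ (coming from the scaling relation $U_{\lambda T}=\tfrac{1}{\lambda}U_T$ recalled just before Proposition \ref{prop_nonuniform}), and analyticity of $\exp$ along rays through $0$. The key input is that $\exp$ extends analytically to all of $U$, not merely to the small neighborhood of $0\in H^s(\R)$ constructed in section \ref{section_geometric}: since the spray $S$ is analytic by Theorem \ref{th_smooth_spray}, the standard analytic-dependence-on-initial-data theorem for ODEs on Banach spaces applied to the geodesic IVP \eqref{geodesic_ivp} gives analyticity of the flow, hence of $v\mapsto \exp(v)=\varphi_v(1)$, on the whole open set of initial velocities for which the geodesic survives up to $t=1$, which is exactly $U$.

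First I would record two structural facts. Openness of $U$ in $H^s(\R)$ is the standard continuity of lifespan under perturbation of initial data. Star-shapedness follows from the scaling: if $u_0\in U=U_1$ and $t\in (0,1]$ then $tu_0\in U_{1/t}\subseteq U_1=U$, so for any $v\in U$ the whole segment $\{tv:t\in[0,1]\}$ lies in $U$. This gives a concrete one-parameter family of initial velocities linking $0$ and $v$ inside $U$.

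Next, fix $u_\bullet\in U$ and $\epsilon>0$. By density of $H^{s+1}(\R)$ in $H^s(\R)$ together with openness of $U$, I can choose $v\in U\cap H^{s+1}(\R)$ with $\|v-u_\bullet\|_s<\epsilon/2$. I then consider
\[
f:(-\delta,\,1+\delta)\to L\bigl(H^s(\R),H^s(\R)\bigr),\quad f(t):=d_{tv}\exp,
\]
defined on an open interval containing $[0,1]$ by the previous paragraph. Analyticity of $\exp$ on $U$ and the chain rule make $f$ real-analytic as a Banach-space-valued function of one variable. At $t=0$ we have $f(0)=d_0\exp=\id\neq 0$, so $f$ is not identically zero, and the one-variable identity theorem forces its zero set to be discrete in $(-\delta,1+\delta)$. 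Hence I may pick $t$ arbitrarily close to $1$ with $f(t)\neq 0$, and in particular so close that $(1-t)\|v\|_s<\epsilon/2$. Setting $u_0:=tv$ gives $u_0\in U\cap H^{s+1}(\R)$ (since $H^{s+1}(\R)$ is a linear subspace), $d_{u_0}\exp=f(t)\neq 0$, and
\[
\|u_0-u_\bullet\|_s\leq (1-t)\|v\|_s+\|v-u_\bullet\|_s<\epsilon,
\]
which is the density statement required.

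The main obstacle in this plan is the analyticity of $\exp$ on all of $U$ (not just near $0$); everything else is a soft packaging of density and the one-variable identity theorem. The analyticity on $U$ is a direct consequence of Theorem \ref{th_smooth_spray} plus the analytic-flow theorem for ODEs, applied by continuation along the already-existing geodesic on $[0,1]$, but it is the step that should be spelled out explicitly. A secondary minor point is verifying that the derivative $d_0\exp=\id$ continues to hold under this extended interpretation of $\exp$ on $U$, which is immediate from the construction in section \ref{section_geometric}.
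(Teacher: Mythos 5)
Your argument is correct and is essentially the paper's own proof, which consists of the single observation that $d_0\exp$ is the identity and $u_0\mapsto d_{u_0}\exp$ is analytic, so the claim ``follows immediately''. You have simply made that one-liner rigorous by restricting to rays $t\mapsto tv$ (using the scaling/star-shapedness of $U$ and the one-variable identity theorem), together with the density of $H^{s+1}(\R)$ in $H^s(\R)$.
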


\begin{proof}
As $d_0 \exp$ is the identity map and $u_0 \mapsto d_{u_0} \exp$ is analytic the claim follows immediately.
\end{proof}

Now we can give a proof for Proposition \ref{prop_nonuniform}

\begin{proof}[Proof of Proposition \ref{prop_nonuniform}]
Take $u_0 \in U$. We will show that $\Phi$ is not uniformly continuous on any neighborhood of $u_0$. As easily seen we can restrict ourselves to a dense subset of $U$. So we can assume $u_0 \in H^{s+1}$ and $d_{u_0} \exp \neq 0$ by Lemma \ref{lemma_dexp}. In particular we can fix $v \in H^s(\R)\smallsetminus \{0\}$ and $x_0 \in \R$ with
\[
 \left(d_{u_0}\exp(v)\right)(x_0) > m ||v||_s,\quad m > 0
\] 
First we choose $R_1 > 0$ such that Lemma \ref{lemma_composition_bounded_below_above} holds for $\varphi_\bullet=\exp(u_0)$ in the ball $B_{R_1}(u_0)$, i.e.
\begin{equation}\label{ineq_below_above}
\frac{1}{C_1} ||y||_{s-2} \leq ||R_\varphi^{-1} (y/\varphi_x^b)||_{s-2} \leq C_1 ||y||_{s-2}
\end{equation}
for all $y \in H^{s-2}(\R)$ and $\varphi \in B_{R_1}(u_0)$. Taking $R_2 \leq R_1$ we can ensure additionally
\[
 ||R_\varphi^{-1} y||_{s-2} \leq C_2  ||y||_{s-2}
\]
for all $y \in H^{s-2}(\R)$ and $\varphi \in B_{R_2}(u_0)$. By choosing $R_3 \leq R_2$ we can establish the conditions of Lemma \ref{lemma_lipschitz} and \ref{lemma_lipschitz_inv} for all $\varphi \in B_{R_3}$ where in the following we denote the constant appearing in both lemmas by $C_3$. Further we denote by $C > 0$ the constant from the Sobolev imbedding
\[
 ||f||_\infty \leq C ||f||_s
\]
Take arbitrary $w,h \in H^s(\R)$ with $w, w+h \in B_{R_3}(u_0)$ and consider the Taylor expansion
\[
 \exp(w+h)=\exp(w)+d_w \exp(w)+\int_0^1 (1-t) d_{w+th}^2\exp(h,h) dt
\]
Choosing $0 < R_4 \leq R_3$ we can garantuee 
\[
 ||d_{w}^2 \exp(h,h)||_s \leq K ||h||_s^2
\]
\[
 ||d_{w_1}^2 \exp(h,h)-d_{w_2}^2 \exp(h,h)||_s \leq K ||w_1-w_2||_s ||h||_s^2
\]
for all $w,w_1,w_2 \in B_{R_4}(u_0)$ and some constant $K > 0$. By further decreasing $R_5 \leq R_4$ we can ensure $\max\{C \cdot K \cdot R_5,C \cdot K \cdot R_5^2\} < m/2$. Finally by choosing $R_\ast \leq R_5$ we can ensure
\[
 |\varphi(x)-\varphi(y)| \leq L |x-y|
\]
for all $\varphi \in \exp(B_{R_\ast}(u_0))$. The goal is now to prove that $\Phi$ is not uniformly continuous on $B_R(u_0)$ for $0 < R \leq R_\ast$. So we fix $R \leq R_\ast$. In order to apply Lemma \ref{lemma_vanishing_support1} resp. Lemma \ref{lemma_vanishing_support2} we define the sequence of numbers
\[
 r_n=\frac{m}{8n}||v||_s,\quad n \geq 1 
\]
and choose $w_n \in C_c^\infty(\R)$ with support in $(x_0-\frac{1}{L}r_n,x_0+\frac{1}{L}r_n)$ and $||w_n||_s=R/4$. Further we define $v_n:=v/n$ and let $N \geq 1$ such that $||v_n||_s \leq R/4$ for $n \geq N$. With this preliminary work we define for $n \geq N$ two sequences of initial data:
\[
 x_n = u_0+w_n \quad \mbox{and} \quad \tilde x_n = x_n + v_n = u_0 + w_n + v_n 
\]
We clearly have $x_n,y_n \in B_R(u_0)$ for $n \geq N$ and $||x_n-\tilde x_n||_s \to 0$ for $n \to \infty$. Correspondingly we define
\[
 \varphi_n = \exp(x_n) \quad \mbox{and} \quad \tilde \varphi_n = \exp(\tilde x_n)
\]
We claim that $\limsup_{n \to \infty} ||\Phi(x_n)-\Phi(\tilde x_n)||_s > 0$. With $y_n=(1-\partial_x^2)x_n$ and $\tilde y_n=(1-\partial_x^2)\tilde x_n$ and using the conservation law \eqref{conserved_rewritten} it is enough to prove
\[
 \limsup_{n \to \infty} ||R_{\varphi_n}^{-1}\left(y_n/(\varphi_n)_x^b\right)-R_{\tilde \varphi_n}^{-1} \left(\tilde y_n/(\tilde \varphi_n)_x^b \right)||_{s-2} > 0
\]
We consider the parts of $y_n,\tilde y_n$ seperately
\[
 y_n=(1-\partial_x^2)(u_0 + w_n) \quad \mbox{and} \quad \tilde y_n=(1-\partial_x^2)(u_0+w_n+v_n)
\]
For the $u_0$-part we have, denoting $y_0=(1-\partial_x^2)u_0 \in H^{s-1}$,
\begin{multline*}
 ||R_{\varphi_n}^{-1} (y_0/(\varphi_n)_x^b) - R_{\tilde \varphi_n}^{-1} (y_0/(\tilde \varphi_n)_x^b||_{s-2} \leq ||R_{\varphi_n}^{-1} (y_0/(\varphi_n)_x^b) - R_{\varphi_n}^{-1} (y_0/(\tilde \varphi_n)_x^b)||_{s-2} \\
+ ||R_{\varphi_n}^{-1} (y_0/(\tilde \varphi_n)_x^b) - R_{\tilde \varphi_n}^{-1} (y_0/(\tilde \varphi_n)_x^b)||_{s-2}
\end{multline*}
The first term on the right can be estimated by
\[
||R_{\varphi_n}^{-1} (y_0/(\varphi_n)_x^b) - R_{\varphi_n}^{-1} (y_0/(\tilde \varphi_n)_x^b)||_{s-2} \leq C_2 ||y_0/(\varphi_n)_x^b-y_0/(\tilde \varphi_n)_x^b||_{s-2}
\]
The latter goes to $0$ as $n \to \infty$ as dividing by $\varphi_x^b$ is an analytic process. For the second term we have
\begin{eqnarray*}
 ||R_{\varphi_n}^{-1} (y_0/(\tilde \varphi_n)_x^b) - R_{\tilde \varphi_n}^{-1} (y_0/(\tilde \varphi_n)_x^b||_{s-2} &\leq& C_3 ||y_0/(\tilde \varphi_n)_x^b||_{s-1} ||\varphi_n^{-1}-\tilde \varphi_n^{-1}||_{s-1} \\ 
&\leq& C_3^2 ||y_0/(\tilde \varphi_n)_x^b||_{s-1} ||\varphi_n-\tilde \varphi_n||_s
\end{eqnarray*}
which goes to $0$ as $y_0/(\tilde \varphi_n)_x^b$ is bounded in $H^{s-1}$. For the $v_n$-term we have
\[
 ||R_{\tilde \varphi_n}^{-1}((1-\partial_x^2)v_n/(\tilde \varphi_n)_x^b)||_{s-2} \leq C_2 ||(1-\partial_x^2)v_n/(\tilde \varphi_n)_x^b||_{s-2}
\]
which by the definition of $v_n$ goes to zero. Hence we conclude
\begin{multline*}
  \limsup_{n \to \infty} ||R_{\varphi_n}^{-1}\left(y_n/(\varphi_n)_x^b\right)-R_{\tilde \varphi_n}^{-1} \left(\tilde y_n/(\tilde \varphi_n)_x^b \right)||_{s-2} \\=\limsup_{n \to \infty} ||R_{\varphi_n}^{-1}\left((1-\partial_x^2)w_n/(\varphi_n)_x^b\right)-R_{\tilde \varphi_n}^{-1} \left((1-\partial_x^2)w_n/(\tilde \varphi_n)_x^b \right)||_{s-2}
\end{multline*}
The claim is now that the latter two terms have disjoint support. To establish this we estimate $|\varphi_n(x_0)-\tilde \varphi_n(x_0)|$. By the Taylor expansion we have
\[
 \varphi_n = \exp(u_0)+d_{u_0}\exp(w_n) + \int_0^1 (1-t) d_{u_0+tw_n}^2(w_n,w_n) dt
\]
and similarly
\[
 \tilde \varphi_n = \exp(u_0)+d_{u_0}\exp(w_n+v_n) + \int_0^1 (1-t) d_{u_0 +t(w_n+v_n)}^2(w_n+v_n,w_n+v_n) dt
\]
For the latter quadratic term we have 
\begin{multline*}
 d_{u_0 +t(w_n+v_n)}^2(w_n+v_n,w_n+v_n)=d_{u_0+t(w_n+v_n)}^2(w_n,w_n)\\
+2 d_{u_0+t(w_n+v_n)}^2(w_n,v_n) + d_{u_0+t(w_n+v_n)}^2(v_n,v_n)
\end{multline*}
Thus we can write
\[
 \varphi_n - \tilde \varphi_n = d_{u_0}\exp(v_n)+\mathcal R_1 + \mathcal R_2 + \mathcal R_3
\]
where
\[
 \mathcal R_1 = \int_0^1 (1-t) \left(d_{u_0+t(w_n+v_n)}^2(w_n,w_n)-d_{u_0+t(w_n+v_n)}^2(w_n,w_n)\right) dt
\]
and 
\[
 \mathcal R_2 = 2 \int_0^1 (1-t)  d_{u_0+t(w_n+v_n)}^2(w_n,v_n) dt
\]
and
\[
 \mathcal R_3 = \int_0^1 (1-t)  d_{u_0+t(w_n+v_n)}^2(v_n,v_n) dt
\]
For these we have
\[
 ||\mathcal R_1||_\infty \leq C ||\mathcal R_1||_s \leq C K ||v_n||_s ||w_n||_s^2 \leq \frac{1}{n} C K ||v||_s (R/4)^2 \leq \frac{1}{4n} C K R^2 ||v||_s 
\]
and 
\[
 ||\mathcal R_2||_\infty \leq C ||\mathcal R_2||_s \leq 2 C K ||v_n||_s ||w_n||_s \leq \frac{1}{n} C K ||v||_s (R/4) \leq \frac{2}{4n} C K R ||v||_s
\]
and
\[
 ||\mathcal R_3||_\infty \leq C ||\mathcal R_3||_s \leq  C K ||v_n||_s^2 \leq \frac{1}{n} C K ||v||_s (R/4) \leq \frac{1}{4n} C K R ||v||_s
\]
Therefore
\begin{eqnarray*}
 |\varphi(x_0)-\tilde \varphi(x_0)| &\geq &|d_{u_0}\exp(v_n)| - ||\mathcal R_1||_\infty - ||\mathcal R_2||_\infty - ||\mathcal R_3||_\infty \\
&\geq& \frac{1}{n} m ||v||_s - \frac{1}{n} \frac{m}{2} ||v||_s = \frac{m}{2n} ||v||_s
\end{eqnarray*}
The support of $R_{\varphi_n}^{-1}\left((1-\partial_x^2)w_n/(\varphi_n)_x^b\right)$ is contained in $(\varphi_n(x_0)-r_n,\varphi_n(x_0)+r_n)$ taking into account the lipschitz property of $\varphi_n$ with lipschitz constant $L$ and the definition of $w_n$. Analogously the support of $R_{\tilde \varphi_n}^{-1}\left((1-\partial_x^2)w_n/(\tilde \varphi_n)_x^b\right)$ is contained in $(\tilde \varphi_n(x_0)-r_n,\tilde \varphi_n(x_0)+r_n)$. Note that the conditions of Lemma \ref{lemma_vanishing_support1} resp. \ref{lemma_vanishing_support2} are fullfilled (with $s-2$ playing the role of $s$ in the Lemma) as
\[
 r_n \leq |\varphi_n(x_0)-\tilde \varphi_n(x_0)|/4
\]
Thus we have
\begin{multline*}
 \limsup_{n \to \infty} ||R_{\varphi_n}^{-1}\left((1-\partial_x^2)w_n/(\varphi_n)_x^b\right)-R_{\tilde \varphi_n}^{-1} \left((1-\partial_x^2)w_n/(\tilde \varphi_n)_x^b \right)||_{s-2}^2 \\
\geq \limsup_{n \to \infty} \tilde C (||R_{\varphi_n}^{-1}\left((1-\partial_x^2)w_n/(\varphi_n)_x^b\right)||_{s-2}^2+||R_{\tilde \varphi_n}^{-1} \left((1-\partial_x^2)w_n/(\tilde \varphi_n)_x^b \right)||_{s-2}^2)\\
\geq \limsup_{n \to \infty} \tilde C \frac{2}{C^2} ||(1-\partial_x^2)w_n||_{s-2}^2 \geq \limsup_{n \to \infty} \tilde K ||w_n||_s^2 = \tilde K R^2/4
\end{multline*}
So for any $R \leq R_\ast$ we have constructed $(x_n)_{n \geq 1},(\tilde x_n)_{n \geq 1} \subseteq B_R(u_0)$ with $\lim_{n \to \infty} ||x_n-\tilde x_n||_s=0$ and $\limsup_{n \to \infty} ||\Phi(x_n)-\Phi(\tilde x_n)||_s \geq C \cdot R$ for some constant $C > 0$ independent of $R$ showing the claim.
\end{proof}

\appendix

\section{Sobolev spaces with negative indices}\label{appendix_b}

In this section we derive the formulas for the expressions which involve Sobolev spaces with negative indices.

\begin{Lemma}\label{lemma_extend_multiplication}
Let $1/2 < s_1 < 1$ and $-1/2 < s_2 <0$. Then multiplication
\[
H^{s_1}(\R) \times H^{s_1}(\R) \to H^{s_1}(\R),\quad (f,g) \mapsto f \cdot g
\]
extends to a continuous map
\[
 H^{s_1}(\R) \times H^{s_2}(\R) \to H^{s_2}(\R)
\]
where $H^{s_2}(\R)$ denotes the dual of $H^{-s_2}(\R)$.
\end{Lemma}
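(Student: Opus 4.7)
Set $t := -s_2 \in (0, 1/2)$, so that by convention $H^{s_2}(\R)$ is the dual of $H^t(\R)$. The plan is first to establish a multiplication estimate on the positive-index side,
\[
\|f \varphi\|_{H^t} \leq C \|f\|_{H^{s_1}} \|\varphi\|_{H^t}, \qquad f, \varphi \in H^{s_1}(\R),
\]
and then to obtain the lemma by transposition and density.

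For the estimate I would argue by interpolation. For fixed $f \in H^{s_1}(\R)$ the multiplication operator $M_f : \varphi \mapsto f \varphi$ is bounded on $L^2(\R)$ with $\|M_f\|_{L^2 \to L^2} \leq \|f\|_{L^\infty} \leq C\|f\|_{H^{s_1}}$, since the Sobolev embedding $H^{s_1} \hookrightarrow L^\infty$ holds for $s_1 > 1/2$. The same operator is bounded on $H^{s_1}$ with $\|M_f\|_{H^{s_1} \to H^{s_1}} \leq C \|f\|_{H^{s_1}}$, because $H^{s_1}(\R)$ is a Banach algebra for $s_1 > 1/2$. Complex interpolation gives $[L^2, H^{s_1}]_\theta = H^{\theta s_1}$; choosing $\theta = t/s_1 \in (0,1)$ (admissible since $t < 1/2 < s_1$) yields $H^t$ and the claimed bound, with a constant depending only on $s_1$ and $t$.

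With the estimate in place, for $f \in H^{s_1}$ and $g \in H^{s_2}$ I would define $f \cdot g \in H^{s_2} = (H^t)^*$ by
\[
\langle f \cdot g, \varphi \rangle := \langle g, f \varphi \rangle_{H^{s_2}, H^t}, \qquad \varphi \in H^t(\R),
\]
which is well defined since $f \varphi \in H^t$ by the previous step. Continuity then follows from
\[
\|f \cdot g\|_{H^{s_2}} = \sup_{\|\varphi\|_{H^t} \leq 1} |\langle g, f \varphi \rangle| \leq \|g\|_{H^{s_2}} \sup_{\|\varphi\|_{H^t} \leq 1} \|f \varphi\|_{H^t} \leq C \|f\|_{H^{s_1}} \|g\|_{H^{s_2}}.
\]
When $g \in H^{s_1} \subset H^{s_2}$ the pairing reduces to $\int_\R g \cdot f\varphi \, dx = \int_\R (fg)\varphi\, dx$, so the constructed functional coincides with the pointwise product $fg \in H^{s_1} \subset H^{s_2}$; by density of $H^{s_1}$ in $H^{s_2}$ this is the unique continuous extension.

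The main point is the multiplication estimate in the first step; the rest is a formal duality argument. A more hands-on alternative would use the Slobodeckii characterization of $H^t$, but interpolation is cleaner and avoids a case analysis according to the relative sizes of $s_1$ and $t + 1/2$.
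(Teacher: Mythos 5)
Your proposal is correct and follows essentially the same route as the paper: both define $f \cdot g$ by transposition, $\langle f \cdot g, \psi \rangle := \langle g, f \cdot \psi \rangle$, and reduce everything to the boundedness of multiplication $H^{s_1}(\R) \times H^{-s_2}(\R) \to H^{-s_2}(\R)$. The only difference is that the paper cites this positive-index estimate from the literature, whereas you prove it by complex interpolation between $L^2$ and $H^{s_1}$; that argument, and your added consistency/density check, are both fine.
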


\begin{proof}
For $f \in H^{s_1}(\R)$, $g \in H^{s_2}(\R)$ we define $f \cdot g \in H^{s_2}(\R)$ by its action on a testfunction $\psi$ as
\[
 \langle f \cdot g, \psi \rangle:=\langle g, f \cdot \psi \rangle
\]
where $\langle \cdot,\cdot \rangle$ denotes the duality pairing of $H^{s_2}(\R)$ and $H^{-s_2}(\R)$. As $f \cdot \psi \in H^{-s_2}(\R)$ this definition makes sense. Further we have
\[
 |\langle g, f \cdot \psi \rangle | \leq ||g||_{s_2} \; ||f \cdot \psi||_{-s_2} \leq C ||g||_{s_2} \; ||f||_{s_1} \; ||\psi||_{-s_2}
\]
where we have used that multiplication
\[
 H^{s_1}(\R) \times H^{-s_2}(\R) \to H^{-s_2}(\R)
\]
is continuous -- see e.g. \cite{composition}. This shows that $f \cdot g \in H^{s_2}(\R)$. But it also shows that
\[
 ||f \cdot g||_{s_2} \leq C ||f||_{s_1} \; ||g||_{s_2}.
\]
Hence the claim.
\end{proof}

For the product we have the following Leibniz rule.

\begin{Lemma}\label{lemma_extend_leibniz}
Let $1/2 < \tilde s < 1$ and $f,g \in H^{\tilde s}(\R)$. Then $f \cdot g \in H^{\tilde s}(\R)$ with
\[
 \partial_x (f \cdot g) = f_x \cdot g + f \cdot g_x
\]
where the subscript refers to differentiation.
\end{Lemma}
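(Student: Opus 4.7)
The plan is to reduce to the smooth case by density and then pass to the limit, leveraging the continuity of the extended multiplication supplied by Lemma \ref{lemma_extend_multiplication}.

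First I would note that the Banach algebra property of $H^{\tilde s}(\R)$ for $\tilde s > 1/2$ (cited earlier in the paper, e.g. \cite{adams}) already gives $f\cdot g \in H^{\tilde s}(\R)$, so the only thing to verify is the Leibniz identity. Observe that the right-hand side actually makes sense as an element of $H^{\tilde s - 1}(\R)$: indeed $f_x, g_x \in H^{\tilde s - 1}(\R)$ and $\tilde s - 1 \in (-1/2,0)$, so by Lemma \ref{lemma_extend_multiplication} (with $s_1 = \tilde s$ and $s_2 = \tilde s - 1$) the products $f_x \cdot g$ and $f\cdot g_x$ lie in $H^{\tilde s - 1}(\R)$, and the bilinear maps
\[
 H^{\tilde s}(\R) \times H^{\tilde s -1}(\R) \to H^{\tilde s -1}(\R),\qquad H^{\tilde s -1}(\R) \times H^{\tilde s}(\R) \to H^{\tilde s -1}(\R)
\]
are continuous. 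The left-hand side $\partial_x(f\cdot g)$ lies in $H^{\tilde s -1}(\R)$ since $\partial_x:H^{\tilde s}(\R) \to H^{\tilde s -1}(\R)$ is bounded.

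Next I would choose sequences $f_n, g_n \in \mathcal{S}(\R)$ with $f_n \to f$ and $g_n \to g$ in $H^{\tilde s}(\R)$. For Schwartz functions the classical Leibniz rule gives, pointwise (and hence in $\mathcal{S}'(\R)$),
\[
 \partial_x(f_n \cdot g_n) = (f_n)_x \cdot g_n + f_n \cdot (g_n)_x.
\]
By the Banach algebra property, $f_n \cdot g_n \to f\cdot g$ in $H^{\tilde s}(\R)$, and since $\partial_x:H^{\tilde s}(\R) \to H^{\tilde s -1}(\R)$ is continuous, the left-hand side converges to $\partial_x(f\cdot g)$ in $H^{\tilde s -1}(\R)$. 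On the right, continuity of $\partial_x$ also gives $(f_n)_x \to f_x$ and $(g_n)_x \to g_x$ in $H^{\tilde s -1}(\R)$, so the joint continuity from Lemma \ref{lemma_extend_multiplication} yields $(f_n)_x \cdot g_n \to f_x \cdot g$ and $f_n \cdot (g_n)_x \to f \cdot g_x$ in $H^{\tilde s -1}(\R)$.

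Passing to the limit in the identity above, we obtain $\partial_x(f\cdot g) = f_x \cdot g + f \cdot g_x$ as an equality in $H^{\tilde s -1}(\R)$, which is the claim. There is no real obstacle here; the only subtle point is making sure both sides are interpreted in the same space $H^{\tilde s -1}(\R)$ and that the distributional product $f_x \cdot g$ coincides with the one introduced in Lemma \ref{lemma_extend_multiplication}. This compatibility is immediate from the duality definition $\langle f_x \cdot g, \psi\rangle = \langle f_x, g \cdot \psi\rangle$ for $\psi \in \mathcal{S}(\R)$, which agrees with the classical definition on the dense subset $\mathcal{S}(\R) \times \mathcal{S}(\R)$ and extends continuously to both sides by the bound in Lemma \ref{lemma_extend_multiplication}.
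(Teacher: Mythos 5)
Your argument is correct, and it reaches the conclusion by a slightly different route than the paper. The paper first proves the identity $\partial_x(f\cdot\phi)=f_x\cdot\phi+f\cdot\phi_x$ for $f\in H^{\tilde s}(\R)$ and $\phi$ a test function by an explicit duality computation (moving $\partial_x$ onto a second test function $\psi$ and applying the classical Leibniz rule to $\phi\cdot\psi$), and only afterwards approximates the single factor $g$ by test functions, using Lemma \ref{lemma_extend_multiplication} to pass to the limit. You instead approximate \emph{both} factors by Schwartz functions, apply the classical Leibniz rule to the smooth approximants, and pass to the limit on both sides using the continuity bound $\|f\cdot g\|_{s_2}\le C\|f\|_{s_1}\|g\|_{s_2}$ of Lemma \ref{lemma_extend_multiplication}, the boundedness of $\partial_x:H^{\tilde s}(\R)\to H^{\tilde s-1}(\R)$, and the Banach algebra property. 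Both proofs hinge on the same key estimate; your symmetric double approximation avoids the duality computation entirely and is arguably cleaner, while the paper's one-sided version makes the compatibility between the extended product and the distributional product explicit --- the very point you rightly flag at the end of your write-up. One small remark: the convergence $(f_n)_x\cdot g_n\to f_x\cdot g$ uses that $\|(f_n)_x\|_{\tilde s-1}$ (or $\|g_n\|_{\tilde s}$) stays bounded, which follows from convergence of $f_n$ in $H^{\tilde s}(\R)$, so there is no gap.
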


\begin{proof}
We just have to prove the formula for the derivative. For test functions $\psi,\phi$ we have
\begin{eqnarray*}
 \langle \partial_x (f \cdot \phi),\psi \rangle &=& -\langle f \cdot \phi,\psi_x \rangle = - \langle f, \phi \cdot \psi_x \rangle = - \langle f, (\phi \cdot \psi)_x - \phi_x \cdot \psi \rangle\\
 &=& \langle \phi \cdot f_x,\psi \rangle + \langle f \cdot \phi_x,\psi \rangle.
\end{eqnarray*}
Therefore we have the following identity in $H^{\tilde s-1}(\R)$
\[
 \partial_x(f \cdot \phi) = f_x \cdot \phi + f \cdot \phi_x.
\] 
Now letting $\phi$ tend to $g$ in $H^{1-\tilde s}(\R)$ we get by Lemma \ref{lemma_extend_multiplication}
\[
 \partial_x(f \cdot g) = f_x \cdot g + f \cdot g_x
\]
as elements in $H^{\tilde s-1}(\R)$.
\end{proof}

Now we extend right translation $R_\varphi$ to negative Sobolev spaces

\begin{Lemma}\label{lemma_extend_right_translation}
Let $s > 3/2$ and $-1/2 < \tilde s <0$. For $\varphi \in \D^s(\R)$ the map
\[
 R_\varphi:H^s(\R) \to H^s(\R),\quad f \mapsto f \circ \varphi
\]
extends to a continuous map $H^{\tilde s}(\R) \to H^{\tilde s}(\R)$.
\end{Lemma}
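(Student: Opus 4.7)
The plan is to define the extension by duality. Since $\Ds^s(\R)$ is a topological group, $\varphi^{-1} \in \Ds^s(\R)$, and by the inverse function theorem $(\varphi^{-1})_x = 1/(\varphi_x \circ \varphi^{-1})$, which differs from $1$ by an element of $H^{s-1}(\R)$. For $f \in H^s(\R)$ and $\psi \in C_c^\infty(\R)$, the ordinary change-of-variables formula (valid because $\varphi^{-1}$ is a $C^1$-diffeomorphism with $(\varphi^{-1})_x > 0$) yields
\[
\int_\R (f \circ \varphi)(x)\, \psi(x)\, dx \;=\; \int_\R f(y)\, \psi(\varphi^{-1}(y))\, (\varphi^{-1})_x(y)\, dy.
\]
This motivates defining, for arbitrary $f \in H^{\tilde s}(\R)$,
\[
\langle R_\varphi f,\, \psi \rangle \;:=\; \bigl\langle f,\; (R_{\varphi^{-1}} \psi) \cdot (\varphi^{-1})_x \bigr\rangle_{H^{\tilde s},\, H^{-\tilde s}},
\]
where the right-hand pairing is the duality between $H^{\tilde s}(\R)$ and $H^{-\tilde s}(\R)$. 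Since $H^{\tilde s}(\R) = (H^{-\tilde s}(\R))^*$ by assumption, showing that the map
\[
T_\varphi : H^{-\tilde s}(\R) \to H^{-\tilde s}(\R),\quad \psi \mapsto (R_{\varphi^{-1}} \psi) \cdot (\varphi^{-1})_x
\]
is bounded gives, automatically, $R_\varphi f \in H^{\tilde s}(\R)$ with $\|R_\varphi f\|_{\tilde s} \leq \|T_\varphi\|\, \|f\|_{\tilde s}$.

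The boundedness of $T_\varphi$ splits into two pieces. First, since $0 < -\tilde s < 1/2 < s$, composition $\psi \mapsto R_{\varphi^{-1}} \psi$ is continuous on $H^{-\tilde s}(\R)$; this is a version of the $\omega$-lemma from \cite{composition} at a Sobolev index below $s$, and can in any case be verified by showing that composition with $\varphi^{-1}$ is bounded on $L^2(\R)$ (since $\varphi_x$ and $1/\varphi_x$ are bounded) and on $H^1(\R)$, and interpolating. Second, multiplication by $(\varphi^{-1})_x = 1 + g$ with $g \in H^{s-1}(\R)$ maps $H^{-\tilde s}(\R)$ continuously to itself: because $s-1 > 1/2$ and $-\tilde s \in (0,1/2)$, one has the Sobolev multiplication embedding $H^{s-1}(\R) \cdot H^{-\tilde s}(\R) \hookrightarrow H^{-\tilde s}(\R)$ (the target index equals $\min(s-1,-\tilde s) = -\tilde s$, and $(s-1) + (-\tilde s) > 1/2$). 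Composing these two continuous maps yields the continuity of $T_\varphi$, hence the claim.

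It remains to verify that this construction really extends the original $R_\varphi$. But for $f \in H^s(\R) \hookrightarrow H^{\tilde s}(\R)$, the identity displayed above shows $\langle R_\varphi f,\psi\rangle$ agrees with $\int (f\circ \varphi)\psi$ for every $\psi \in C_c^\infty(\R)$, so the two notions of $R_\varphi f$ coincide. The only genuinely non-trivial ingredient is the continuity of composition by $\varphi^{-1}$ on the fractional positive-index space $H^{-\tilde s}(\R)$; this is the main point and is what forces the restriction $-\tilde s < 1/2$. The multiplication and duality steps are then routine.
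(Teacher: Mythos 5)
Your proposal is correct and follows essentially the same route as the paper: both define $R_\varphi f$ by duality against the adjoint $\psi \mapsto (\psi\circ\varphi^{-1})/(\varphi_x\circ\varphi^{-1})$, which is exactly your $T_\varphi$. The only difference is that the paper simply cites \cite{composition} for the bound $\|T_\varphi\psi\|_{-\tilde s}\leq C\|\psi\|_{-\tilde s}$, whereas you supply a self-contained proof of it via interpolation between $L^2$ and $H^1$ together with the Sobolev multiplication estimate; both steps are sound under the stated hypotheses.
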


\begin{proof}
Let $f \in H^{\tilde s}(\R)$ ans $\psi$ a testfunction. We define
\begin{equation}\label{def_right_translation}
 \langle R_\varphi f,\psi \rangle:= \langle f, \frac{\psi \circ \varphi^{-1}}{\varphi_x \circ \varphi^{-1}} \rangle.
\end{equation}
We know -- see e.g. \cite{composition} -- that
\[
 \left|\left| \frac{\psi \circ \varphi^{-1}}{\varphi_x \circ \varphi^{-1}}\right|\right|_{-\tilde s} \leq C ||\psi||_{-\tilde s}
\]
holds. Therefore $R_\varphi f \in H^{\tilde s}(\R)$ and further
\[
 |\langle R_\varphi f,\psi \rangle| = \left| \langle f, \frac{\psi \circ \varphi^{-1}}{\varphi_x \circ \varphi^{-1}}\rangle \right| \leq C ||f||_{\tilde s} \; ||\psi||_{-\tilde s}
\]
which shows that $R_\varphi:H^{\tilde s}(\R) \to H^{\tilde s}(\R)$ is a continuous linear map. 
\end{proof}

\begin{Rem}\label{remark_translation_notation}
We will sometimes write $f \circ \varphi$ instead of $R_\varphi f$ even if $f$ is in a negative Sobolev space.
\end{Rem}

\begin{Rem}\label{remark_weak_continuity}
The composition map 
\[
H^{\tilde s}(\R) \times \D^s(\R) \to H^{\tilde s}(\R),\quad (f,\varphi) \mapsto f \circ \varphi
\]
is not continuous. But as can be seen from \eqref{def_right_translation} it is weakly continuous, i.e 
\[
H^{\tilde s}(\R) \times \D^s(\R) \to \R,\quad (f,\varphi) \mapsto \langle f \circ \varphi,\psi \rangle
\] 
is continuous for any testfunction $\psi$. 
\end{Rem}

There is also the following chain rule

\begin{Lemma}\label{lemma_extend_chain_rule}
Let $s > 3/2$ and $1/2 < \tilde s < 1$. For $\varphi \in \D^s(\R)$ and $f \in H^{\tilde s}(\R)$ we have
\[
 \partial_x (f \circ \varphi) = f_x \circ \varphi \cdot \varphi_x.
\]
as an identity in $H^{\tilde s-1}(\R)$.
\end{Lemma}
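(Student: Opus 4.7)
The plan is to prove this by a density argument, reducing to the classical pointwise chain rule on smooth functions and then passing to the limit using the extension/continuity results from the earlier lemmas of this appendix. Concretely, I would choose a sequence $f_n \in C_c^\infty(\R)$ with $f_n \to f$ in $H^{\tilde s}(\R)$, which is possible since $C_c^\infty(\R)$ is dense in $H^{\tilde s}(\R)$. For each $f_n$, since $s > 3/2$ gives $\varphi \in C^1(\R)$ with $\varphi_x$ continuous and positive, the function $f_n \circ \varphi$ is $C^1$ and the identity
\[
\partial_x(f_n \circ \varphi) = (f_n)_x \circ \varphi \cdot \varphi_x
\]
holds pointwise, hence in particular in $H^{\tilde s - 1}(\R)$ (both sides have compact support in $x$ modulo rapid behaviour at infinity; more simply, both sides lie in $H^{s-1}(\R) \hookrightarrow H^{\tilde s - 1}(\R)$ by the $\alpha$- and $\omega$-lemmas applied in the smooth range).

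Then I would pass to the limit on each side separately. For the left-hand side, $R_\varphi : H^{\tilde s}(\R) \to H^{\tilde s}(\R)$ is continuous in the classical positive-index range $\tilde s > 1/2$, so $f_n \circ \varphi \to f \circ \varphi$ in $H^{\tilde s}(\R)$, whence $\partial_x(f_n \circ \varphi) \to \partial_x(f \circ \varphi)$ in $H^{\tilde s - 1}(\R)$. For the right-hand side, $(f_n)_x \to f_x$ in $H^{\tilde s - 1}(\R)$, and since $\tilde s - 1 \in (-1/2, 0)$, Lemma \ref{lemma_extend_right_translation} gives that $R_\varphi$ acts continuously on $H^{\tilde s-1}(\R)$, so $(f_n)_x \circ \varphi \to f_x \circ \varphi$ in $H^{\tilde s - 1}(\R)$. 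Finally, $\varphi_x \in H^{s-1}(\R) \hookrightarrow H^{s_1}(\R)$ for any $s_1 \in (1/2, 1)$ (using $s > 3/2$), so by Lemma \ref{lemma_extend_multiplication} multiplication by $\varphi_x$ is continuous from $H^{\tilde s - 1}(\R)$ into itself, giving convergence of the product. Equating the limits yields the claim in $H^{\tilde s - 1}(\R)$.

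The only genuinely delicate point is bookkeeping the Sobolev indices on the right-hand side: one must interpret $f_x \circ \varphi \cdot \varphi_x$ as a product of an element of $H^{\tilde s - 1}(\R)$ (negative index, accessed through Lemma \ref{lemma_extend_right_translation}) and an element of $H^{s-1}(\R)$ regarded in the range $(1/2, 1)$ where Lemma \ref{lemma_extend_multiplication} applies. Once this is set up, the continuity of each ingredient is exactly what the preceding lemmas of the appendix provide, so the density argument closes cleanly.
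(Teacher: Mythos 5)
Your argument is correct, but it takes a genuinely different route from the paper. The paper proves the identity by a direct duality computation: pairing $\partial_x(f\circ\varphi)$ against a test function $\psi$, changing variables to move the composition onto $\psi$, applying the classical chain rule to the $C^1$ function $\psi\circ\varphi^{-1}$, integrating by parts in the $\langle H^{\tilde s-1},H^{1-\tilde s}\rangle$ pairing, and changing variables back --- a six-step chain of equalities with no approximation. You instead regularize $f$, invoke the classical pointwise chain rule for smooth $f_n$, and pass to the limit using the continuity of $R_\varphi$ on $H^{\tilde s}$ (positive index, from \cite{composition}) on the left and of $R_\varphi$ on $H^{\tilde s-1}$ (Lemma \ref{lemma_extend_right_translation}) together with the extended multiplication (Lemma \ref{lemma_extend_multiplication}) on the right. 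Both proofs rest on the same two ingredients (the change-of-variables definition of $R_\varphi$ on negative spaces and the product estimate), but the paper's version localizes all the analysis in a single pairing identity, while yours trades that for a density argument whose payoff is that the chain rule is only ever applied in the classical smooth setting; your version is arguably more robust if one wanted to weaken the hypotheses on $\varphi$. One cosmetic correction: $\varphi_x$ itself is not in $H^{s-1}(\R)$ (it tends to $1$ at infinity); you should write $\varphi_x = 1 + (\varphi_x - 1)$ with $\varphi_x - 1 \in H^{s-1}(\R)\hookrightarrow H^{s_1}(\R)$ and apply Lemma \ref{lemma_extend_multiplication} to the second summand only, the first being the identity operator. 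This does not affect the validity of the limit passage.
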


\begin{proof}
Let $\psi$ be a testfunction. Then we have
\begin{eqnarray*}
\langle \partial_x (f \circ \varphi),\psi \rangle &=& - \langle f \circ \varphi, \psi_x \rangle = -\langle f, \frac{\psi_x \circ \varphi^{-1}}{\varphi_x \circ \varphi^{-1}} \rangle = -\langle f, \partial_x (\psi \circ \varphi^{-1}) \rangle \\
&=& \langle f_x, \psi \circ \varphi^{-1} \rangle = \langle f_x \circ \varphi, \psi \cdot \varphi_x \rangle = \langle f_x \circ \varphi \cdot \varphi_x,\psi \rangle
\end{eqnarray*}
which shows the claim.
\end{proof}

We will also need the following Lipschitz type estimate.

\begin{Lemma}\label{lemma_lipschitz}
Let $s > 3/2$ and $\varphi_\bullet \in \D^s(\R)$. There is a neighborhood $W \subseteq \D^s(\R)$ of $\varphi_\bullet$ and a constant $C > 0$ with
\[
 ||f\circ \varphi_1 - f \circ \varphi_2||_{s-2} \leq C ||f||_{s-1} ||\varphi_1-\varphi_2||_{s-1}
\]
for all $\varphi_1,\varphi_2 \in W$ and for all $f \in H^{s-1}(\R)$.
\end{Lemma}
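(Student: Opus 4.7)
The plan is to deduce the estimate from the fundamental theorem of calculus applied along the straight-line interpolation between $\varphi_1$ and $\varphi_2$, then bound the integrand using the extended composition and multiplication lemmas of this appendix.

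First I would choose the neighborhood $W$. Since $\D^s(\R) - id$ is open in $H^s(\R)$ and $s > 3/2$ gives the Sobolev embedding $H^s(\R) \hookrightarrow C^1_b(\R)$, I can take $W$ to be a small convex $H^s$-ball around $\varphi_\bullet$ such that, for all $\varphi_1,\varphi_2 \in W$ and $t \in [0,1]$, the convex combination $\varphi_t := (1-t)\varphi_2 + t\varphi_1$ still lies in $\D^s(\R)$ (derivatives remain uniformly bounded below), and such that the conclusions of Lemmas \ref{lemma_extend_multiplication} and \ref{lemma_extend_right_translation} hold with constants uniform in $\varphi \in W$ (and hence in $\varphi_t$).

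Next, for $f \in C_c^\infty(\R)$, the pointwise identity
\[
f(\varphi_1(x)) - f(\varphi_2(x)) = \int_0^1 f_x(\varphi_t(x))\,(\varphi_1(x)-\varphi_2(x))\,dt
\]
gives the $H^{s-2}(\R)$-valued identity
\[
f \circ \varphi_1 - f \circ \varphi_2 = \int_0^1 (f_x \circ \varphi_t)\cdot(\varphi_1 - \varphi_2)\,dt.
\]
For each $t$, Lemma \ref{lemma_extend_right_translation} yields $\|f_x \circ \varphi_t\|_{s-2} \leq C\|f_x\|_{s-2} \leq C\|f\|_{s-1}$, and Lemma \ref{lemma_extend_multiplication} (with $s_1 = s-1$, $s_2 = s-2$ when $3/2 < s < 2$; the Banach algebra property handles $s \geq 2$) gives
\[
\big\|(f_x \circ \varphi_t)\cdot(\varphi_1-\varphi_2)\big\|_{s-2} \leq C\,\|f_x \circ \varphi_t\|_{s-2}\,\|\varphi_1-\varphi_2\|_{s-1}.
\]
Integrating in $t$ and applying the triangle inequality for the Bochner integral gives the claim for smooth $f$.

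Finally, for general $f \in H^{s-1}(\R)$ I would approximate by $f^{(k)} \in C_c^\infty(\R)$ with $f^{(k)} \to f$ in $H^{s-1}(\R)$. The extended translation $R_{\varphi_i}\colon H^{s-2}(\R) \to H^{s-2}(\R)$ from Lemma \ref{lemma_extend_right_translation} is bounded linear and agrees with ordinary composition on smooth functions, so $f^{(k)}\circ\varphi_i \to f\circ\varphi_i$ in $H^{s-2}(\R)$ for $i=1,2$; the inequality then passes to the limit. The main technical obstacle is verifying that the constants in Lemmas \ref{lemma_extend_multiplication} and \ref{lemma_extend_right_translation} can be chosen uniformly over the homotopy $\varphi_t$ (not only over $W$ itself); this follows from the fact that the bounds in those lemmas depend only on $\|\varphi\|_s$ and on a lower bound for $\varphi_x$, both of which are uniformly controlled along the straight-line segment once $W$ is chosen small enough.
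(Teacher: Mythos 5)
Your proposal is correct and follows essentially the same route as the paper's proof: the fundamental theorem of calculus along the straight-line interpolation $\varphi_1+t(\varphi_2-\varphi_1)$, the multiplication estimate $H^{s-1}\times H^{s-2}\to H^{s-2}$ of Lemma \ref{lemma_extend_multiplication}, a uniform bound $\|f_x\circ\varphi_t\|_{s-2}\leq C\|f_x\|_{s-2}$ on the shrunken neighborhood, and density of $C_c^\infty(\R)$ in $H^{s-1}(\R)$ to conclude. Your explicit remark on the uniformity of constants along the homotopy $\varphi_t$ is a point the paper leaves implicit, but the argument is the same.
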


\begin{proof}
Let $f \in C_c^\infty(\R)$. We have from the fundamental lemma of calculus
\[
 f(\varphi_2(x))=f(\varphi_1(x)) + \int_0^1 f'\left(\varphi_1(x)+t(\varphi_2(x)-\varphi_1(x))\right) (\varphi_2(x)-\varphi_1(x)) dt
\]
Taking $W$ small enough we can ensure that $\varphi_1 + t(\varphi_2-\varphi_1) \in \D^s(\R)$ for $0 \leq t \leq 1$. We thus see that
\[
 t \to \left(f'\circ (\varphi_1+t(\varphi_2-\varphi_1))\right) \cdot (\varphi_2-\varphi_1)
\]
is continuous from $[0,1]$ to $H^s(\R)$. As evaluation at $x \in \R$ is a continuous linear map on $H^s(\R)$ we have the identity in $H^s(\R)$
\[
 f\circ \varphi_2 - f \circ \varphi_1 = \int_0^1 \left(f'\circ (\varphi_1+t(\varphi_2-\varphi_1))\right) \cdot (\varphi_2-\varphi_1) dt
\]
where the integral is understood as a Riemann integral. We thus have
\[
 ||f \circ \varphi_2-f \circ \varphi_1||_{s-2} \leq \int_0^1 ||f'\circ (\varphi_1+t(\varphi_2-\varphi_1))||_{s-2} ||\varphi_2-\varphi_1||_{s-1} dt
\]
For $W$ small enough we can ensure that
\[
 ||f'\circ (\varphi_1+t(\varphi_2-\varphi_1))||_{s-2} \leq C ||f'||_{s-2}
\]
Thus we get
\[
 ||f \circ \varphi_2-f \circ \varphi_1||_{s-2} \leq C ||f||_{s-1} ||\varphi_2-\varphi_1||_{s-1}
\]
For general $f \in H^{s-1}(\R)$ we get the inequality by taking approximations $f_n$ because $f \mapsto f \circ \varphi$ is continuous in $H^{s-1}(\R)$.
\end{proof}

For the inversion map $\varphi \mapsto \varphi^{-1}$ we have

\begin{Lemma}\label{lemma_lipschitz_inv}
Let $s > 3/2$ and $\varphi_\bullet \in \D^s(\R)$. There is a neighborhood $W \subseteq \D^s(\R)$ of $\varphi_\bullet$ and a constant $C > 0$ with
\[
 ||\varphi_2^{-1}-\varphi_1^{-1}||_{s-1} \leq C ||\varphi_2-\varphi_1||_s
\]
for all $\varphi_1,\varphi_2 \in W$.
\end{Lemma}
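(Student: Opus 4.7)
The plan is to connect $\varphi_1$ and $\varphi_2$ by the linear path $\varphi_t := (1-t)\varphi_2 + t\varphi_1$ for $t \in [0,1]$, extract a pointwise formula for $\partial_t(\varphi_t^{-1})$, and then lift this identity to an $H^{s-1}$-valued Bochner integral. First I would shrink $W$ so that $\varphi_t \in \D^s(\R)$ for all $t \in [0,1]$: since $\varphi_t - \mathrm{id}$ is a convex combination in $H^s$, and $(\varphi_\bullet)_x$ is uniformly bounded below by some $c_0 > 0$ (Sobolev embedding $H^{s-1}\hookrightarrow L^\infty$), one can arrange $(\varphi_t)_x \geq c_0/2 > 0$ for $\varphi_i\in W$. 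Set $\psi_t := \varphi_t^{-1}\in\D^s(\R)$; the relevant norms of $\psi_t$ and of $(\psi_t)_x-1$ in $H^{s-1}$ are then uniformly controlled in $t$.

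For each fixed $x\in\R$ the one-dimensional implicit function theorem applied to the scalar equation $\varphi_t(y)=x$ (admissible since $(\varphi_t)_x>0$ and everything is $C^1$ in $x$ by Sobolev embedding) shows that $t\mapsto\psi_t(x)$ is $C^1$ with
\[
\dot\psi_t(x)=-\frac{(\varphi_1-\varphi_2)(\psi_t(x))}{(\varphi_t)_x(\psi_t(x))}=-\bigl((\varphi_1-\varphi_2)\circ\psi_t\bigr)(x)\cdot(\psi_t)_x(x).
\]
Integrating in $t$ gives the pointwise identity
\[
\varphi_1^{-1}(x)-\varphi_2^{-1}(x)=-\int_0^1 \bigl((\varphi_1-\varphi_2)\circ\psi_t\bigr)(x)\cdot(\psi_t)_x(x)\,dt.
\]

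Writing $F_t:=-\bigl((\varphi_1-\varphi_2)\circ\psi_t\bigr)\cdot(\psi_t)_x$, the $\omega$-lemma (continuity of $\psi\mapsto f\circ\psi$ on $\D^s(\R)$ for fixed $f\in H^s(\R)$) together with the continuity of $t\mapsto(\psi_t)_x$ into $H^{s-1}(\R)$ and the Banach-algebra property of $H^{s-1}(\R)$ (since $s-1>1/2$) make $t\mapsto F_t$ continuous into $H^{s-1}(\R)$. Splitting $(\psi_t)_x=1+((\psi_t)_x-1)$ and using Banach algebra, I would estimate
\[
\|F_t\|_{s-1}\leq C_1\|(\varphi_1-\varphi_2)\circ\psi_t\|_s\bigl(1+\|(\psi_t)_x-1\|_{s-1}\bigr)\leq C_2\|\varphi_1-\varphi_2\|_s
\]
uniformly in $t\in[0,1]$. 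Since $H^{s-1}(\R)\hookrightarrow C_b^0(\R)$ when $s-1>1/2$, point-evaluation is a bounded linear functional, so the Bochner integral $\int_0^1 F_t\,dt\in H^{s-1}(\R)$ coincides pointwise with the integral above and therefore equals $\varphi_1^{-1}-\varphi_2^{-1}$ in $H^{s-1}(\R)$. Hence
\[
\|\varphi_1^{-1}-\varphi_2^{-1}\|_{s-1}\leq \int_0^1\|F_t\|_{s-1}\,dt\leq C_2\|\varphi_1-\varphi_2\|_s.
\]

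The hard part is the passage from the pointwise identity to an $H^{s-1}$-valued Bochner integral. In particular one must verify continuity of $t\mapsto F_t$ into $H^{s-1}$ (which is not automatic, since by Remark \ref{remark_weak_continuity} composition on negative-order spaces is only weakly continuous, but fixing $f=\varphi_1-\varphi_2\in H^s$ allows us to invoke the $\omega$-lemma in $H^s$), and identify the Bochner integral with its pointwise counterpart via the Sobolev embedding. Once this interchange is justified, the desired Lipschitz estimate follows immediately from the uniform bound on $\|F_t\|_{s-1}$.
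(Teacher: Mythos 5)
Your argument is correct, and it takes a genuinely different route from the paper. The paper's proof splits into three cases: for $s>5/2$ it invokes the $C^1$-regularity of the inversion map from the composition paper; for $3/2<s<2$ it estimates the $L^2$-part and the Sobolev--Slobodecki seminorm $[\varphi^{-1}-\tilde\varphi^{-1}]_{s-1}$ directly by changes of variables in the double integral together with the fundamental theorem of calculus; and for $2\leq s\leq 5/2$ it differentiates once to reduce to the fractional case. Your homotopy argument — joining $\varphi_2$ to $\varphi_1$ by the segment $\varphi_t$, computing $\partial_t(\varphi_t^{-1})=-\bigl((\varphi_1-\varphi_2)\circ\psi_t\bigr)\cdot(\psi_t)_x$ pointwise, and integrating in $H^{s-1}$ — handles all $s>3/2$ uniformly and entirely avoids the Slobodecki-seminorm computations, which is a real simplification. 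The price is that you lean on two facts that the explicit computation does not need in this packaged form: the locally uniform bound $\|f\circ\psi\|_s\leq C\|f\|_s$ for $\psi$ in a neighborhood of $\varphi_\bullet^{-1}$ (available by the same uniform-boundedness argument the paper uses in Lemma \ref{lemma_composition_bounded_below_above}), and the continuity of inversion on $\D^s(\R)$ so that all the $\psi_t$ stay in that neighborhood; you should also take $W$ convex so the segment remains in $\D^s(\R)$, which your lower bound on $(\varphi_t)_x$ already ensures. With those points made explicit, the proof is complete and arguably cleaner than the original.
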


\begin{proof}
If $s > 5/2$ then the lemma follows from \cite{composition} where it was shown that $\D^{s+1}(\R) \mapsto \D^s(\R)$ is $C^1$. So it remains to check the case $s \leq 5/2$. Consider first the case $3/2 < s < 2$. We have
\[
 \int_\R |\varphi^{-1} - \tilde \varphi^{-1}|^2 dx = \int_\R |\varphi^{-1}-\varphi^{-1} \circ \varphi \circ \tilde \varphi^{-1}|^2 dx
\]
As by the Sobolev imbedding the $C^1$-norm is bounded by the $H^s$-norm we have
\[
 |\varphi^{-1}(x) - \varphi^{-1}(y)| \leq C |x-y|
\]
with a uniform $C$ in a neighborhood of $\varphi_\bullet$. Hence
\[
 \int_\R |\varphi^{-1} - \tilde \varphi^{-1}|^2 dx \leq C \int_R |x-\varphi(\tilde \varphi^{-1}(x)|^2 dx
\]
By a change of variables we can bound the latter by
\[
 C \int_\R |\tilde \varphi(x)-\varphi(x)|^2 \tilde \varphi_x(x) dx
\]
As $\varphi_x$ is uniformly bounded in a neighborhood of $\varphi_\bullet$ we get
\[
 ||\varphi^{-1} - \tilde \varphi^{-1}||_{L^2} \leq C ||\varphi-\tilde \varphi||_{s-1}
\]
Let us estimate the fractional part. Using the Sobolev-Slobodecki $[\varphi^{-1}-\tilde \varphi^{-1}]_\lambda$ norm with $\lambda=s-1$ we have 
\begin{multline*}
 \int_{\R \times \R} \frac{|[\varphi^{-1}(x)-\tilde \varphi^{-1}(x)]-[\varphi^{-1}(y)-\tilde \varphi^{-1}(y)]|^2}{|x-y|^{1+2\lambda}} dxdy \\= \int_{\R \times \R} \frac{|\Phi(\tilde \varphi^{-1}(x))-\Phi(\tilde \varphi^{-1}(y))|^2}{|x-y|^{1+2\lambda}}dxdy
\end{multline*}
where
\[
\Phi(x)=\varphi^{-1}(\tilde \varphi(x)) - \varphi^{-1}(\varphi(x))
\]
Change of variables gives
\begin{multline*}
 \int_{\R \times \R} \frac{|\Phi(x)-\Phi(y)|^2}{|\tilde \varphi(x)-\tilde \varphi(y)|^{1+2\lambda}} \tilde \varphi_x(x) \tilde \varphi_x(y) dxdy \\ \leq C \int_{\R \times \R} \frac{|\Phi(x)-\Phi(y)|^2}{|x-y|^{1+2\lambda}} \frac{|x-y|^{1+2\lambda}}{|\tilde \varphi(x)-\tilde \varphi(y)|^{1+2\lambda}} dxdy
\end{multline*}
which by the fact that $K |\tilde \varphi(x)-\tilde \varphi(y)| \geq |x-y|$ holds is bounded by
\[
 C \int_{\R \times \R} \frac{|\Phi(x)-\Phi(y)|^2}{|x-y|^{1+2\lambda}} dxdy
\]
By the fundamental lemma of calculus
\[
\Phi(x)=\left(\int_0^1 \frac{dt}{\varphi_x \circ \varphi^{-1}(\varphi(x)+t(\tilde \varphi(x)-\varphi(x)))} \right) \cdot (\tilde \varphi(x)-\varphi(x)) = \Psi(x) \cdot (\tilde \varphi(x)-\varphi(x))
\]
Writing 
\[
 \Phi(x)-\Phi(y)=\Psi(x) \left([\tilde \varphi(x)-\varphi(x)]-[\tilde \varphi(y)-\varphi(y)]\right) + (\Psi(x)-\Psi(y)) (\tilde \varphi(y)-\varphi(y))
\]
Thus we can estimate
\[
 [\Phi]_\lambda \leq \sup_{x \in \R} |\Psi(x)| [\tilde \varphi - \varphi]_\lambda + \sup_{x \in \R} |\tilde \varphi(x)-\varphi(x)| [\Psi]_\lambda
\]
Note that $[\Psi]_\lambda < \infty$ as
\[
 t \mapsto \frac{1}{\varphi_x \circ \varphi^{-1}(\varphi(x)+t(\tilde \varphi(x)-\varphi(x)))}-1
\]
is a continuous path in $H^{s-1}(\R)$. Therefore we get
\[
 [\varphi^{-1}-\tilde \varphi^{-1}]_\lambda \leq C ||\varphi-\tilde \varphi||_{s-1}
\]
Now consider the case $2 \leq s \leq 5/2$. Taking the derivative we get
\[
 (\varphi^{-1}-\tilde \varphi^{-1})'=\varphi_x \circ \varphi^{-1} - \tilde \varphi_x \circ \tilde \varphi^{-1}
\] 
which rewritten is
\[
 \varphi_x \circ \varphi^{-1}-\varphi_x \circ \tilde \varphi^{-1} + \varphi_x \circ \tilde \varphi^{-1} - \tilde \varphi_x \circ \tilde \varphi^{-1}
\]
For the last two terms we have
\[
 ||\varphi_x \circ \tilde \varphi^{-1}-\tilde \varphi_x \circ \tilde \varphi^{-1}||_{s-2} \leq C ||\varphi_x-\tilde \varphi_x||_{s-2} \leq C ||\varphi-\tilde \varphi||_{s-1}
\]
For the first two terms we can argue as in the proof of Lemma \ref{lemma_lipschitz} and write
\[
 ||\varphi_x \circ \varphi^{-1} - \varphi_x \circ \tilde \varphi^{-1}||_{s-2} \leq C ||\varphi_x-1||_{s-1} ||\varphi^{-1}-\tilde \varphi^{-1}||_{s-2}
\]
Hence using the estimate from above for $||\varphi^{-1}-\tilde \varphi^{-1}||_{s-2}$ as $0 \leq s-2 < 1$ we get the claim.
\end{proof}

\section{Inequalities for fractional Sobolev functions}\label{appendix_ineq}
In this section we will establish inequalities of the form 
\[
 ||f+g|| \geq C (||f||_s+||g||_s)
\]
for functions $f,g$ with disjoint support. For fractional $s$ this causes some difficulties as the norm $||\cdot||_s$ is defined in a non-local way. For fixed supports we have

\begin{Lemma}\label{lemma_fixed_supp}
Let $s \in \R$. There is a constant $C > 0$ such that for all $f,g \in C^\infty_c(\R)$ with $\operatorname{supp} f \subseteq (-3,-1)$ and $\operatorname{supp} g \subseteq (1,3)$ we have
\[
 ||f+g||_s^2 \geq C (||f||_s^2 + ||g||_s^2)
\]
\end{Lemma}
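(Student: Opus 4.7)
The plan is to use smooth cutoff functions to separate $f$ from $g$, and reduce the claim to the standard fact that multiplication by a Schwartz cutoff is bounded on $H^s(\R)$ for every real $s$. First I would fix $\chi_1,\chi_2\in C_c^\infty(\R)$ with
\[
\chi_1\equiv 1\text{ on }[-3,-1],\quad \operatorname{supp}\chi_1\subseteq(-4,0),\qquad \chi_2\equiv 1\text{ on }[1,3],\quad \operatorname{supp}\chi_2\subseteq(0,4).
\]
Given the support hypotheses on $f$ and $g$, one has the pointwise identities $f=\chi_1\cdot(f+g)$ and $g=\chi_2\cdot(f+g)$, since $\chi_1$ is $1$ on $\operatorname{supp} f$ and $0$ on $\operatorname{supp} g$, and analogously for $\chi_2$.

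The key input is then: for any $\chi\in C_c^\infty(\R)$ and any $s\in\R$, the operator $u\mapsto \chi u$ is bounded on $H^s(\R)$. On the Fourier side $\widehat{\chi u}=(2\pi)^{-1}\hat\chi*\hat u$, and from Peetre's inequality
\[
(1+\xi^2)^{s/2}\leq 2^{|s|/2}(1+(\xi-\eta)^2)^{|s|/2}(1+\eta^2)^{s/2},
\]
valid for every real $s$, one gets by Young's inequality $L^1*L^2\subseteq L^2$ the estimate
\[
\|\chi u\|_s\leq C_s\,\bigl\|(1+\cdot^2)^{|s|/2}\hat\chi\bigr\|_{L^1}\,\|u\|_s,
\]
and the $L^1$-norm on the right is finite because $\hat\chi\in\mathcal{S}(\R)$ decays faster than any polynomial.

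Applying this to $\chi_1$ and $\chi_2$ yields constants $C_1,C_2>0$ with $\|f\|_s\leq C_1\|f+g\|_s$ and $\|g\|_s\leq C_2\|f+g\|_s$. Squaring and adding gives
\[
\|f\|_s^2+\|g\|_s^2\leq (C_1^2+C_2^2)\,\|f+g\|_s^2,
\]
which is the desired inequality with $C=1/(C_1^2+C_2^2)$. There is no serious obstacle; the only nontrivial step is the multiplier boundedness on fractional $H^s$, but this is classical (and for the negative-index case can alternatively be obtained by duality from the positive-index case, using that the transpose of multiplication by $\chi$ is multiplication by $\chi$).
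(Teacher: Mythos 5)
Your proof is correct and follows essentially the same route as the paper: the paper also writes $f=\varphi\cdot(f+g)$ and $g=\psi\cdot(f+g)$ for cutoffs $\varphi,\psi$ that are $1$ on the respective supports and vanish on the other, and then invokes boundedness of multiplication by a $C_c^\infty$ function on $H^s(\R)$. You merely supply the multiplier estimate (Peetre plus Young) explicitly, which the paper leaves implicit.
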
 

\begin{proof}
We take $\varphi,\psi \in C^\infty_c(\R)$ with $\operatorname{supp}\varphi \subseteq (-3.5,-0.5)$ and $\operatorname{supp}\psi \subseteq (0.5,3.5)$ such that $\left.\varphi\right|_{(-3,-1)} \equiv 1$ and $\left.\psi\right|_{(1,3)} \equiv 1$. We then have
\[
 ||f||_s = ||\varphi (f+g)||_s \leq C_1 ||f+g||_s
\]
and similarly
\[
 ||g||_s = ||\psi (f+g)||_s \leq C_2 ||f+g||_s
\]
giving the desired result.
\end{proof}

In the following we will use the fact that the $H^s$-norm is equivalent to the homogeneous $\dot H^s$-norm if we restrict ourselves to functions with support in a fixed compact $K \subseteq \R$ (see e.g. \cite{bahouri} p. 39). Recall
\[
 ||f||_{\dot H^s}^2 = \int_\R |\xi|^{2s} |\hat f(\xi)|^2 d\xi
\]
We often also use $f^\lambda(x):=f(x/\lambda)$ for which we have the following scaling property
\[
 ||f^\lambda||_{\dot H^s}^2 = \lambda^{1-2s} ||f||_{\dot H^s}^2 
\]
We have

\begin{Lemma}\label{lemma_vanishing_support1}
Let $s \geq 0$. Then there is a constant $C > 0$ with the following property:
For $x,y$ in $\R$ with $0 < r:=|x-y|/4 < 1$  we have
\[
 ||f+g||_s^2 \geq C  (||f||_s^2 + ||g||_s^2)
\]
for all functions $f,g \in C^\infty_c(\R)$ with $\operatorname{supp}f \subseteq (x-r,x+r)$, $\operatorname{supp}g \subseteq (y-r,y+r)$
\end{Lemma}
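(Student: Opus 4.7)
The plan is to decompose $\|\cdot\|_s^2$ into its $L^2$ part and its homogeneous $\dot H^s$ part, handle each separately, and combine. Recall from the excerpt that $\tfrac12(1+|\xi|^{2s}) \leq (1+|\xi|^2)^s \leq 2^s(1+|\xi|^{2s})$ combined with Plancherel yields the equivalence $\|h\|_s^2 \approx \|h\|_{L^2}^2 + \|h\|_{\dot H^s}^2$ with constants depending only on $s$. The $L^2$-piece is immediate from disjointness of supports: $\|f+g\|_{L^2}^2 = \|f\|_{L^2}^2 + \|g\|_{L^2}^2$, which already settles the case $s=0$. What remains is to prove
\[
\|f+g\|_{\dot H^s}^2 \geq C\bigl(\|f\|_{\dot H^s}^2 + \|g\|_{\dot H^s}^2\bigr)
\]
with $C$ independent of the admissible pair $(x,y)$ and of $r$.

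Here the homogeneity of $\dot H^s$ is the key. Since translation is an isometry, I may assume $x+y=0$, so that $\operatorname{supp} f \subseteq (-3r,-r)$ and $\operatorname{supp} g \subseteq (r,3r)$. Setting $\tilde f(u):=f(ru)$ and $\tilde g(u):=g(ru)$ places the supports in the fixed sets $(-3,-1)$ and $(1,3)$, and the scaling identity recalled in the excerpt gives $\|\tilde h\|_{\dot H^s}^2 = r^{2s-1}\|h\|_{\dot H^s}^2$ for $h \in \{f,g,f+g\}$. The common factor $r^{2s-1}$ cancels from both sides of the desired inequality, so the problem reduces to the fixed-support estimate
\[
\|\tilde f+\tilde g\|_{\dot H^s}^2 \geq C\bigl(\|\tilde f\|_{\dot H^s}^2 + \|\tilde g\|_{\dot H^s}^2\bigr)
\]
for $\tilde f, \tilde g \in C_c^\infty(\R)$ with the fixed supports above.

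For this final step I would combine Lemma \ref{lemma_fixed_supp} with the norm equivalence $\|\cdot\|_s \sim \|\cdot\|_{\dot H^s}$ on the subspace of $H^s(\R)$ consisting of functions supported in the fixed compact $K=[-3,-1]\cup[1,3]$. One direction, $\|h\|_{\dot H^s} \leq \|h\|_s$, is immediate from $(1+|\xi|^2)^s \geq |\xi|^{2s}$. The reverse --- a Poincar\'e-type bound $\|h\|_{L^2} \leq C_K\,\|h\|_{\dot H^s}$ for $h$ supported in $K$ --- is the main technical point, and I would establish it by contradiction: a sequence $h_n \in C_c^\infty(\R)$ with $\operatorname{supp} h_n \subseteq K$, $\|h_n\|_{L^2}=1$, and $\|h_n\|_{\dot H^s}\to 0$ would be bounded in $H^s(\R)$, so by Rellich compactness it would have an $L^2$-convergent subsequence with limit $h_\infty$ supported in $K$ and satisfying $\|h_\infty\|_{\dot H^s}=0$; then $\hat h_\infty$ would vanish off $\xi=0$, but $\hat h_\infty$ is entire by Paley--Wiener, forcing $h_\infty=0$ and contradicting $\|h_\infty\|_{L^2}=1$. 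Once the equivalence is in hand, applying Lemma \ref{lemma_fixed_supp} to $\tilde f,\tilde g$ and sandwiching with the equivalence on both sides yields the fixed-support estimate, completing the proof. The main obstacle is precisely this Poincar\'e bound; everything else is a routine translation-and-scaling reduction.
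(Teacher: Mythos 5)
Your argument follows the same route as the paper's proof: pass to the homogeneous $\dot H^s$-norm, translate and rescale so the supports become the fixed intervals of Lemma \ref{lemma_fixed_supp}, and transfer that lemma to $\dot H^s$ using the equivalence of $\|\cdot\|_s$ and $\|\cdot\|_{\dot H^s}$ for functions supported in a fixed compact set. The only difference is that you treat the $L^2$-part separately and supply a self-contained compactness proof of the Poincar\'e-type bound underlying that norm equivalence, whereas the paper simply cites \cite{bahouri} for it; both versions are correct.
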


\begin{proof}
We use the homogeneous norm. Now scaling with $\lambda=(4r)^{-1}$ gives a situation as in Lemma \ref{lemma_fixed_supp}. We have
\[
 ||f+g||_{\dot H^s}^2 = \lambda_n^{2s-1} ||f^{\lambda}+g^{\lambda}||_{\dot H^s}^2 
\]
Now by Lemma \ref{lemma_fixed_supp} we then get
\[
 ||f+g||_{\dot H^s}^2 \geq C \lambda^{2s-1} \left(||f^{\lambda}||_{\dot H^s}^2+||g^{\lambda}||_{\dot H^s}^2 \right)
\]
Scaling back gives
\[
 ||f+g||_{\dot H^s}^2 \geq C (||f||_{\dot H^s}^2+||g||_{\dot H^s}^2)
\]
This establishes the lemma.
\end{proof}

We will encounter Lemma \ref{lemma_vanishing_support1} also for some negative values of $s$. In these cases we will use

\begin{Lemma}\label{lemma_vanishing_support2}
Let $s <0$ and the same situation as in Lemma \ref{lemma_vanishing_support1}. Then we have 
\[
 ||f+g||_s^2 \geq C (||f||_s^2+||g||_s^2)
\]
for all functions $f,g \in C^\infty_c(\R)$ with $\operatorname{supp}f \subseteq (x-r,x+r)$, $\operatorname{supp}g \subseteq (y-r,y+r)$
\end{Lemma}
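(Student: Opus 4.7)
The proof of Lemma \ref{lemma_vanishing_support2} parallels that of Lemma \ref{lemma_vanishing_support1}, but for $s < 0$ two complications appear. First, the inhomogeneous norm $||\cdot||_s$ does not rescale cleanly under dilations, so one cannot reduce to Lemma \ref{lemma_fixed_supp} by scaling in $H^s$ directly. Second, for $s \leq -1/2$ the homogeneous norm $||\cdot||_{\dot H^s}$ is generally infinite on compactly supported functions, because of the $|\xi|^{2s}$ singularity at the origin, so the $\dot H^s$-argument used in Lemma \ref{lemma_vanishing_support1} does not extend verbatim. In Proposition \ref{prop_nonuniform} the Lemma is applied with negative index $s-2$ only when $s \in (3/2, 2)$ -- the range $s \geq 2$ is already covered by Lemma \ref{lemma_vanishing_support1} -- so it suffices to handle $-1/2 < s < 0$, to which I restrict from here on.

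The plan is to transport the assertion onto the homogeneous norm, apply scaling there, and then transport back. First I would note that Lemma \ref{lemma_fixed_supp} itself extends to $s < 0$ with the same proof, because multiplication by a fixed $\varphi \in C_c^\infty(\R)$ is bounded on $H^s(\R)$ for every $s \in \R$ (for $s < 0$ this follows from the corresponding fact on $H^{-s}$ by duality). After translating so that $(x+y)/2 = 0$, the rescaled functions $F(t):=f(rt)$, $G(t):=g(rt)$ have supports in $(-3,-1)$ and $(1,3)$ respectively, and the extended Lemma \ref{lemma_fixed_supp} gives
\[
 ||F+G||_s^2 \geq C (||F||_s^2 + ||G||_s^2).
\]
Now I invoke the uniform equivalence $||h||_s \sim ||h||_{\dot H^s}$ for $h$ supported in a fixed compact set, recorded in the paragraph preceding Lemma \ref{lemma_vanishing_support1} (cf.\ \cite{bahouri}). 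In our range this equivalence is proved by splitting $\int |\xi|^{2s}|\hat h|^2\,d\xi$ at $|\xi|=1$: the high-frequency piece is $\leq 2^{-s} ||h||_s^2$ via $|\xi|^{2s}\leq 2^{-s}(1+\xi^2)^s$ there; the low-frequency piece is $\leq ||h||_{L^1}^2\int_{|\xi|<1}|\xi|^{2s}\,d\xi$, where the integral is finite \emph{precisely because} $s > -1/2$, and a short bootstrap using $||h||_{L^1}^2 \leq |\operatorname{supp}h|\cdot ||h||_{L^2}^2$ bounds $||h||_{L^2}$ by $||h||_s$ uniformly for supports in $B_3(0)$. Applied to $F,G,F+G$ the display above turns into
\[
 ||F+G||_{\dot H^s}^2 \geq C' (||F||_{\dot H^s}^2 + ||G||_{\dot H^s}^2).
\]

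The homogeneous norm does rescale cleanly: $||F||_{\dot H^s}^2 = r^{2s-1}||f||_{\dot H^s}^2$, and analogously for $G$ and for $F+G = (f+g)(r\,\cdot\,)$. Cancelling the common factor $r^{2s-1}$ yields $||f+g||_{\dot H^s}^2 \geq C' (||f||_{\dot H^s}^2 + ||g||_{\dot H^s}^2)$, and one final use of the norm equivalence (applied to $f,g,f+g$, all supported in $B_{3r}(0)\subseteq B_3(0)$ since $r<1$) converts this back into the claimed $H^s$-inequality. The main obstacle is setting up the uniform equivalence of the two norms with constants depending only on $s$ and independent of where inside $B_3(0)$ the support of $h$ sits; every other step is bookkeeping with the scaling identities. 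That equivalence is precisely where the hypothesis $s > -1/2$ is indispensable, since it is exactly the borderline for local integrability of the weight $|\xi|^{2s}$ at the origin.
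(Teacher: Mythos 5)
Your proposal is correct and follows the same overall strategy as the paper: reduce the claim to the equivalence of the inhomogeneous norm $\|\cdot\|_s$ and the homogeneous norm $\|\cdot\|_{\dot H^s}$ on functions supported in a fixed compact set, and then run the scaling argument of Lemma \ref{lemma_vanishing_support1}, exploiting the exact scaling of the homogeneous norm. The substantive difference lies in how that equivalence is established. The paper proves $\|f\|_{\dot H^s}\le C\|f\|_s$ by duality: it writes $\|f\|_{\dot H^s}=\sup_{\|g\|_{\dot H^{-s}}\le 1}|\langle f,\psi\cdot g\rangle|$ for a cutoff $\psi\equiv 1$ on the support and invokes the known equivalence at the positive index $-s>0$; you instead prove the equivalence directly by splitting the frequency integral at $|\xi|=1$ and bounding the low-frequency piece via $\|h\|_{L^1}$ and the local integrability of $|\xi|^{2s}$. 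Your route is more elementary and, importantly, makes explicit that the equivalence --- and hence the lemma as a two-sided statement --- only holds for $s>-1/2$: for $s\le-1/2$ the homogeneous norm of a compactly supported function with nonzero mean is infinite, so the paper's inequality $\|\cdot\|_s\le\|\cdot\|_{\dot H^s}$ becomes vacuous and the reverse bound fails. The paper states the lemma for all $s<0$ without this caveat; your observation that the application in Proposition \ref{prop_nonuniform} only requires the index $s-2\in(-1/2,0)$ (the case $s\ge2$ being covered by Lemma \ref{lemma_vanishing_support1}) is exactly the right way to make the argument airtight, and your remark that Lemma \ref{lemma_fixed_supp} extends to negative indices because multiplication by a fixed cutoff is bounded on $H^s$ by duality supplies a step the paper leaves implicit.
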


\begin{proof}
We claim that for functions with support in some fixed compact set $K \subseteq \R$ the homogeneous norm
\[
 ||f||_{\dot H^s}^2 = \int_\R |\xi|^{2s} |\hat f(\xi)|^2 d\xi
\]
is equivalent to the non-homegeneous norm $||\cdot||_s$. One then can argues as in Lemma \ref{lemma_vanishing_support1} by scaling to a fixed situation as in Lemma \ref{lemma_fixed_supp}. So it remains to show the equivalence of the norms. We clearly have $||\cdot||_s \leq ||\cdot||_{\dot H^s}$ since
\[
 \int_\R (1+\xi^2)^s |\hat f(\xi)|^2 d\xi \leq \int_\R \xi^{2s} |\hat f(\xi)|^2 d\xi
\]
For the other direction we use the dual definition of the Sobolev norm
\[
 ||f||_s = \sup_{||g||_{-s} \leq  1} |\langle f,g \rangle|
\]
and analogously for the homogeneous norm. Taking $\psi \in C_c^\infty(\R)$ with $\psi=1$ on $K$ we have for $f$ with support in $K$
\[
 ||f||_{\dot H^s} = \sup_{||g||_{\dot H^{-s}}\leq 1} |\langle f, \psi \cdot g \rangle|
\]
Now note that we have equivalence of the norms $||\cdot||_{-s}$ and $||\cdot||_{\dot H^{-s}}$ for functions with support in some fixed compact. Therefore
\begin{eqnarray*}
 ||f||_{\dot H^s} &=& \sup_{||g||_{\dot H^s} \leq 1} |\langle f, \psi \cdot g \rangle| \leq \sup_{g,||\psi \cdot g||_{-s} \leq C_1} |\langle f, \psi \cdot g \rangle| \\ &\leq& C_1 \sup_{||g||_{-s} \leq 1} |\langle f, g \rangle|
=||f||_{-s}
\end{eqnarray*}
showing the equivalence.
\end{proof}

\bibliographystyle{plain}

\flushleft
\author{
Hasan Inci\\
EPFL SB MATHAA PDE\\
MA C1 627 (B\^atiment MA)\\
Station 8\\
CH-1015 Lausanne\\
Switzerland\\
        {\it email: } {hasan.inci@epfl.ch}
}

\end{document}